\numberwithin{equation}{section}
    \newcommand{\keywords}[1]{\par\addvspace\baselineskip  
     \noindent\keywordname\enspace\ignorespaces#1}
\DeclareMathOperator{\mdiv}{div}
\DeclareMathOperator{\grad}{\nabla}
\newcommand{\for}{\mbox{for }}
\renewcommand{\ker}[1]{\text{ker}(#1)}
\newcommand{\genref}[2]{#2\,\ref{#1}}
\newcommand{\lemref}[1]{\genref{#1}{Lemma}}
\newcommand{\defref}[1]{\genref{#1}{Definition}}
\newcommand{\equref}[1]{(\ref{#1})} 
\newcommand{\secref}[1]{Section\,\ref{#1}}
\newcommand{\tabref}[1]{Table\,\ref{#1}}
\newcommand{\figref}[1]{Figure\,\ref{#1}}
\newcommand{\MatTwo}[4]
  {\begin{bmatrix}
    #1 & #2\\
    #3 & #4
\end{bmatrix}}
\newcommand{\VecTwo}[2]
  {\begin{bmatrix}
    #1 \\
    #2
\end{bmatrix}}
\begin{document}

\mainmatter


\title{DUAL-PRIMAL ISOGEOMETRIC TEARING AND INTERCONNECTING SOLVERS FOR MULTIPATCH DG-IGA EQUATIONS}
\titlerunning{dG-IETI-DP on multipatch dG-IgA}

\author{
Christoph Hofer$^1$
\and     Ulrich Langer$^1$ 
}
\authorrunning{C. Hofer,  U. Langer}

\institute{ $^1$ Johann Radon Institute for Computational and Applied Mathematics,\\ 
		  Austrian Academy of Sciences, 
		  Altenberger Str. 69, 4040 Linz, Austria.\\ 
\mailse\\
\mailsa \\
 }


%
\maketitle

\begin{abstract} 

In this paper we consider a new version of the dual-primal isogeometric tearing and interconnecting (IETI-DP) method for solving large-scale linear systems  of algebraic equations arising from discontinuous Galerkin (dG) isogeometric analysis of diffusion problems on multipatch domains  with non-matching meshes. The dG formulation is used to couple the local problems across patch interfaces. The purpose of this paper is to present this new method  and provide numerical examples indicating a polylogarithmic condition number bound for the preconditioned system and showing an incredible robustness with respect to large jumps in the diffusion coefficient across the interfaces. 
\keywords{
Elliptic boundary value problems, diffusion problems, heterogeneous diffusion coefficients,
 isogeometric analysis,
 domain decomposition,
 discontinuous Galerkin,
 FETI,
 IETI-DP algorithms,
}
\end{abstract}



\section{Introduction}
\label{HL:Sec:Introduction}
Isogeometric Analysis (IgA) is an approach to approximate numerically the solution of  a partial differential equation (PDE) using the same basis functions for parametrizing the geometry and  for representing the solution. IgA was introduced by Huges, Cottrell and Bazilevs in \cite{HL:HughesCottrellBazilevs:2005a}, see also \cite{HL:BazilevsVeigaCottrellHughesSangalli:2006} for the first results on the numerical analysis of IgA, the monograph  \cite{HL:CotrellHughesBazilevs:2009} for a comprehensive presentation of the IgA,
and the recent survey article 
\cite{HL:BeiraodaVeigaBuffaSangalliVazquez:2014a}
on the mathematical analysis variational isogeometric methods.
There exists a wide variety of different basis functions, e.g.,  B-Splines, Non Uniform Rational B-Spline (NURBS), T-Splines, Hierarchical B-Splines (HB-Splines) and Truncated Hierarchical B-Splines (THB-Splines), see, e.g.,  
\cite{HL:CotrellHughesBazilevs:2009}, 
\cite{HL:BeiraodaVeigaBuffaSangalliVazquez:2014a},
\cite{HL:GiannelliJuettlerSpeleers:2012a} and
\cite{HL:GiannelliJuettlerSpeleers:2014a}.
%
A major advantage over the common finite element method (FEM) is more flexibility of h- and p-refinement, resulting also in $C^k,k\geq0$ continuous basis functions. However, due to the larger support of the basis functions, the resulting system matrices are denser and due to the more involved evaluation mechanism of B-Splines, NURBS, etc., we have to deal with much higher assembling times,
see, e.g., Section~8 in \cite{HL:BeiraodaVeigaBuffaSangalliVazquez:2014a} 
for a discussion of this issue and for relevant references. 
The low-rank tensor approximation technique,
proposed in \cite{HL:MantzaflarisJuettlerKhoromskijLanger:2015a},
is certainly a very smart and at the same time simple technology 
to overcome this bottleneck.

Beside matrix generation, the efficient solution of the linear systems
arising from IgA discretization of linear elliptic boundary value problem
or from the linearization of non-linear IgA equations 
turns out to be another bottleneck for the efficiency of the IgA technology.
In this paper, we consider a non-overlapping domain decomposition method based on 
the Finite Element Tearing and Interconnecting (FETI) for IgA, 
called IsogEometric Tearing and Interconnecting (IETI). 
In particular, we focus on the dual primal variant (IETI-DP), introduced in \cite{HL:KleissPechsteinJuettlerTomar:2012a}.
A comprehensive theoretical analysis of the FETI-DP and the equivalent  Balancing Domain Decomposition by Constraints (BDDC) method can be found in
the monographs
\cite{HL:ToselliWidlund:2005a} and \cite{HL:Pechstein:2013a}
where the reader also find the references
to the corresponding original papers.
The theoretical analysis of the IETI-DP and BDDC method was initiated in \cite{HL:VeigaChoPavarinoScacchi:2013a} and extended in \cite{HL:HoferLanger:2015a}. 
Based on the FE work in \cite{HL:DohrmannWidlund:2013a}, 
a recent improvement for the IgA BDDC preconditioner with a more advanced scaling technique, the so called \emph{deluxe scaling}, can be found in \cite{HL:VeigaPavarinoScacchiWidlundZampini:2014a}. Domain decomposition methods for IgA is currently a very active field of research. We mention developments in overlapping Schwarz methods, see, e.g. \cite{HL:VeigaChoPavarinoScacchi:2012a}, \cite{HL:VeigaChoPavarinoScacchi:2013b}, \cite{HL:BercovierSoloveichik:2015}, and isogeometric mortaring discretizations, see \cite{HL:HeschBetsch:2012a}.

In this paper, based on the IETI-DP mentioned above method, we 
are going 
to develop an efficient and robust solver for IgA systems arising from a discretization where the different patches are coupled with a discontinuous Galerkin (dG) method, called dG-IETI-DP. This setting is of special importance when considering non-matching meshes, see, e.g., \cite{HL:LangerToulopoulos:2014a}, and in case of non-matching interface parametrizations, resulting in gaps and overlaps, see, \cite{HL:HoferLangerToulopoulos_2015a} and \cite{HL:HoferToulopoulos_2015a}. The proposed method is based on the 
corresponding
version for finite elements (FE) proposed in \cite{HL:DryjaGalvisSarkis:2013a} and \cite{HL:DryjaSarkis:2014a}, 
where a rigorous analysis for 2D and 3D proves the same properties 
as for the classical FETI-DP, see also \cite{HL:DryjaGalvisSarkis:2007a} for an analysis of the corresponding BDDC preconditioner.

In the present paper, we consider the following 
second-order elliptic boundary value problem 
in a bounded Lipschitz domain $\Omega\subset \mathbb{R}, d\in\{2,3\}$,
as a typical model problem:
Find $u: \overline{\Omega} \rightarrow \mathbb{R}$ such that\\ 
%
%
\begin{equation}
  \label{equ:ModelStrong}
  - \mdiv(\alpha \grad u) = f \; \text{in } \Omega,\;
  u = 0 \; \text{on } \Gamma_D,
  \;\text{and}\;
  \alpha \frac{\partial u}{\partial n} = g_N \; \text{on } \Gamma_N,
\end{equation}
with given, sufficient smooth data $f, g_N \text{ and } \alpha$, where  
the coefficient $\alpha$ is uniformly bounded from below and above
by some positive constants $\alpha_{min}$ and $\alpha_{max}$, respectively.
The boundary $\Gamma = \partial \Omega$ of the computational domain $\Omega$
consists of a Dirichlet part $\Gamma_D$ of positive boundary measure 
and a Neumann part $\Gamma_N$.
Furthermore, we assume that the Dirichlet boundary $\Gamma_D$ is always 
a union of complete domain sides (edges / face in 2d / 3d) 
which are uniquely defined in IgA.
Without loss of generality, we assume 
homogeneous
Dirichlet conditions. 
This can always be  obtained by homogenization.
By means of integration by parts, 
we arrive at
the weak formulation of \equref{equ:ModelStrong} 
which reads as follows:
Find $u \in V_{D} = \{ u\in H^1: \gamma_0 u = 0 \text{ on } \Gamma_D \}$
such that
\begin{align}
  \label{equ:ModelVar}
    a(u,v) = \left\langle F, v \right\rangle \quad \forall v \in V_{D},
\end{align}
where $\gamma_0$ denotes the trace operator. The bilinear form
$a(\cdot,\cdot): V_{D} \times V_{D} \rightarrow \mathbb{R}$
and the linear form $\left\langle F, \cdot \right\rangle: V_{D} \rightarrow \mathbb{R}$
are given by the expressions
\begin{equation*}
a(u,v) = \int_\Omega \alpha \nabla u \nabla v \,dx
\quad \mbox{and} \quad
\left\langle F, v \right\rangle = \int_\Omega f v \,dx + \int_{\Gamma_N} g_N v \,ds.
\end{equation*}

The remainder of this paper is organized as follows. 
\secref{sec:Preliminaries} 
gives
a short overview of the main principles of IgA,
 and presents the dG-IgA formulation. The dG-IETI-DP method is defined and discussed in \secref{sec:dG_IETI_DP}. 
The numerical results, presented in \secref{sec:numerical}, 
demonstrate the numerical behaviour of dG-IETI-DP method.
In particular, we study the influence of the mesh size $h$, the patch diameter $H$, 
the use of non-matching meshes  
quantified by the mesh size ratio $h^{(k)}/h^{(l)}$ across the patch faces, 
and the polynomial degree $p$ on the 
condition number of the preconditioned system and, thus, on the 
number of iterations.
Finally, in \secref{HL:Sec:Conclusions}, we 
draw some conclusions and give some outlook.


\section{Preliminaries}
\label{sec:Preliminaries}
In this section, we 
give an overview of the tools required to 
describe
the IETI-DP method for multipatch dG-IgA equations. A more comprehensive study of IgA, IETI-DP and related topics can be found in \cite{HL:HoferLanger:2015a}. 


\subsection{B-Splines and IgA}
Let $[0,1]$ be the unit interval, 
the vector $\Xi = \{\xi_{1}=0,\xi_{2},\ldots,\xi_{m}=1\}$ with non-decreasing real values $\xi_{i}$ forms a partition of $[0,1]$ and is called $\emph{knot vector}$. 
Given a knot vector $\Xi$, $p\in\mathbb{N}$ and $n=m-p-1$, we can define the B-Spline function via the following recursive formulation:
 \begin{align}
 \label{equ:BSpline0}
  \check{N}_{i,0}(\xi) &=\begin{cases}
                 1 & \mbox{if }\xi_{i}\leq \xi \leq \xi_{i+1}\\
                 0 & \mbox{otherwise}
                \end{cases},\\
  \label{equ:BSpline}
  N_{i,p}(\xi)&= \frac{\xi-\xi_{i}}{\xi_{i+p}-\xi_{i}}N_{i,p-1}(\xi) + \frac{\xi_{i+p+1}-\xi}{\xi_{i+p+1}-\xi_{i+1}}N_{i+1,p-1}(\xi),
 \end{align}
 where $i=1,\ldots,n$ and $p$ is called \emph{degree}. From this recursion, we can observe
 that $N_{i,p}$ is a piecewise polynomial of degree $p$. Furthermore, we only consider open knot vectors, i.e., the first and the last node is repeated $p$ times.
 
 Since we are considering ${d}$-dimensional problems, we need to extent the concept of B-Splines to the ${d}$-dimensional space, which is done via the tensor product. Let $(p_1,\ldots,p_{d})$ be a vector in $\mathbb{N}^{d}$, and let, for all $\iota=1,\ldots,{d}$, $\Xi^\iota$ be a knot vector. Furthermore, we denote 
the $i_\iota$ univariate B-Spline defined on the knot vector $\Xi^\iota$ by $N_{i_\iota,p}^\iota(\xi^\iota)$. 
Then the ${d}$-dimensional tensor product B-Spline (TB-Spline) is defined by
 \begin{align}
  N_{(i_1,\ldots,i_d),(p_1,\ldots,p_d)}(\xi) = \prod_{\iota=1}^{{d}}N_{i_j,p_\iota}^\iota(\xi^\iota).
 \end{align}
In order to avoid cumbersome notations, we will again denote the tensor product B-Spline by $N_{i,p}$ and interpret $i$ and $p$ as multi-indices. Additionally, we define the set 
of multi-indices $\mathcal{I}$ by
\begin{align*}
 \mathcal{I} := \{(i_1,\ldots,i_{d}): i_\iota \in \{1,\ldots,M_\iota\} \}.
\end{align*} 
Since the knot vector $\Xi$ provides a partition of $(0,1)^{d}$, 
called \emph{parameter domain} in the following , it introduces a mesh $\hat{\mathcal{Q}}$, and we will denote a mesh element by $\hat{Q}$, called \emph{cell}. 

Now we are in a position 
to describe our computational domain, called \emph{physical domain}, $\Omega = G((0,1)^{d})$ by means of the 
\emph{geometrical mapping} $G$ defined by
\begin{align*}
   G :\;& (0,1)^{d} \rightarrow \mathbb{R}^{{g}}\\
	    &G(\xi) := \sum_{i\in \mathcal{I}}P_iN_{i,p}(\xi).
\end{align*}
%
In practice, it is often 
necessary to describe the physical domain $\Omega$ by $N$ non overlapping domains $\Omega^{(k)}$, called \emph{patches}. Each $\Omega^{(k)}$ is the image of an associated geometrical mapping $G^{(k)}$, defined on the parameter domain $(0,1)^{d}$, i.e., $\Omega^{(k)} = G^{(k)}\left((0,1)^{d}\right) \; \for k = 1,\ldots,N$, and $\overline{\Omega} = \bigcup_{k=1}^{N} \overline{\Omega}^{(k)}.$

We denote the interface between the two patches $\Omega^{(k)}$ and $\Omega^{(l)}$ by $\Gamma^{(k,l)}$, and the collection of all interfaces by $\Gamma$, i.e.,
\begin{equation*}
\Gamma^{(k,l)} = \overline{\Omega }^{(k)}\cap\overline{\Omega}^{(l)} 
\quad \mbox{and} \quad
\Gamma := \bigcup_{l>k}\Gamma^{(k,l)}.
\end{equation*}
Furthermore, the boundary of the domain $\Omega$ is denoted by $\partial\Omega$. 
Note that the interface $\Gamma$ is sometimes called \emph{skeleton}.

The key point of IgA is to use the same basis functions for representing the geometry via the geometrical mapping also for generating the trial and test spaces. 
Therefore, we define the basis functions in the physical domain as $\check{N}_{i,p}:= N_{i,p} \circ G^{-1}$ and the discrete function space by 
 \begin{align}
 \label{equ:gVh}
   V_{h} = \text{span}\{\check{N}_{i,p}\}_{i\in{\mathcal{I}}} \subset H^1(\Omega).
 \end{align}
 Moreover, each function 
$u(x) = \sum_{i\in\mathcal{I}} u_i \check{N}_{i,p}(x)$ 
is associated with the vector $\boldsymbol{u} = (u_i)_{i\in\mathcal{I}}$. 
This map is known as \emph{Ritz isomorphism}. One usually writes this relation as 
$u_h \leftrightarrow \boldsymbol{u}$,
%
%
 and we will use it in the following without further comments. If we consider a single patch $\Omega^{(k)}$ of a multipatch domain $\Omega$, we will use the notation $V_{h}^{(k)},\check{N}_{i,p}^{(k)},N_{i,p}^{(k)}$ and $G^{(k)}$ with the analogous definitions.


\subsection{Conforming Galerkin IgA Scheme}
\label{sec:cG_IGA}

In conforming Galerkin IgA schemes, we use functions which are continuous across patch interfaces, i.e.
 \begin{align*}
  V_{h} = \{v\;| \;v|_{\Omega^{(k)}}\in V_{h}^{(k)}\}\cap H^1 (\Omega).
 \end{align*}
 The decomposition of the space $V_{h}$ into basis function associated to $\Gamma$ and to the interior of each patch plays an important role for deriving IETI methods. 
Let us define the spaces
 \begin{align*}
  V_{\Gamma,h} := \text{span}\{ \check{N}_{i,p}|\; i\in \mathcal{I}_B\} \subset H^1(\Omega) \quad \text{and}\quad V_{I,h}^{(k)} := V_{h}^{(k)} \cap H^1_0(\Omega^{(k)}),
  \end{align*}
where $\mathcal{I}_B$ denotes all indices of basis functions having support on $\Gamma$.
  We are now able to state the desired decomposition
\begin{align}
 V_{h} = \prod_{k =1}^{N}V_{I,h}^{(k)} \oplus \mathcal{H}\left(V_{\Gamma,h}\right),
 \end{align}
where $\mathcal{H}$ is the \emph{discrete spline harmonic extension}, see \cite{HL:HoferLanger:2015a} and references therein.

The Galerkin IgA scheme reads as follows:
Find $u_h \in V_{D,h}$ such that
  \begin{align}
  \label{equ:ModelDisc}
    a(u_h,v_h) = \left\langle F, v_h \right\rangle \quad \forall v_h \in V_{D,h},
  \end{align}
where $V_{D,h} \subset V_{D}$ is the space of all functions from $V_{h}$ 
which vanish on the Dirichlet boundary $\Gamma_D$. 
A basis for this space 
is given by
the B-Spline functions $\{\check{N}_{i,p}\}_{i\in\mathcal{I}_0}$, where $\mathcal{I}_0$ contains all indices of $\mathcal{I}$ which do not have a support on the Dirichlet boundary $\Gamma_D$. 
Hence, the Galerkin IgA scheme $\equref{equ:ModelDisc}$ is equivalent to 
the linear system of algebraic equations
\begin {align}
\label{equ:Ku=f}
  \boldsymbol{K} \boldsymbol{u} = \boldsymbol{f},
\end{align}
where 
$\boldsymbol{K} = (\boldsymbol{K}_{i,j})_{i,j\in {\mathcal{I}}_0}$
and
$\boldsymbol{f}= (\boldsymbol{f}_i)_{i\in {\mathcal{I}}_0}$
denote the stiffness matrix and the load vector, respectively,
with 
$ \boldsymbol{K}_{i,j} = a(\check{N}_{j,p},\check{N}_{i,p})$
and 
$\boldsymbol{f}_i = \left\langle F, \check{N}_{i,p} \right\rangle$,
and $\boldsymbol{u}$ is the vector representation of $u_h$ 
given by the IgA isomorphism.


\subsection{Discontinuous Galerkin IgA Scheme}
\label{sec:dg_scheme}
The main principle of the dG-IgA Scheme is to use again the 
spaces $V_{h}^{(k)}$ of continuous functions on each patch $\Omega^{(k)}$, 
whereas 
discontinuities are allowed 
across the patch interface.
The continuity of the function value and its normal derivative are then enforced in a weak sense by adding additional terms to the bilinear form. This situation is especially important when we consider non-matching grids on each patch.
For the remainder of this paper, we define
the dG-IgA space
\begin{align}
\label{equ:gVh_glob}
  V_{h}:= V_{h}(\Omega) := \{v\,| \,v|_{\Omega^{(k)}}\in V_{h}^{(k)}\},
\end{align}
where $V_{h}^{(k)}$ is defined as in \equref{equ:gVh}. 
We now follow the notation 
used in
\cite{HL:Dryja:2003a} and  \cite{HL:DryjaGalvisSarkis:2013a}. 
A comprehensive 
study of dG schemes for FE can be found in \cite{HL:Riviere:2008a} and \cite{HL:PietroErn:2012a}. For an analysis of the dG-IgA scheme, we refer to \cite{HL:LangerToulopoulos:2014a}.
%
Dirichlet boundary conditions can be handled in different ways. 
We can use the dG technique to incorporating them in a weak sense,
see, e.g., \cite{HL:Arnold:1982a} and \cite{HL:ArnoldBrezziCockburnMarini:2002a}. 
This methods for imposing  Dirichlet boundary conditions 
was already proposed by Nitsche \cite{HL:Nitsche:1971a}.
Another method consists in enforcing them in 
a strong sense via an $L^2$ projection and homogenization.
In this paper, 
for simplicity of presentation,
we will follow the latter one, where we assume that the given Dirichlet data can be represented exactly with B-Splines. Hence, we define $V_{D,h}$ as the space of all functions from $V_{h}$ which vanish on the Dirichlet boundary $\Gamma_D$. 
Furthermore, we denote the set of all indices $l$  
such that $\Omega^{(k)}$ and $\Omega^{(l)}$ have a common edge/face (2D/3D) $F^{(kl)}$ by ${\mathcal{I}}_{\mathcal{F}}^{(k)}$.
Having these definitions at hand, we can define the discrete problem based on the Symmetric Interior Penalty (SIP) dG formulation as follows:
Find $u_h \in V_{D,h}$ such that
  \begin{align}
  \label{equ:ModelDiscDG}
    a_h(u_h,v_h) = \left\langle F, v_h \right\rangle \quad \forall v_h \in V_{D,h},
  \end{align}
where
\begin{align*}
    a_h(u,v) &:= \sum_{k=1}^N a_e^{(k)}(u,v) \quad \text{and} \quad \left\langle F, v \right\rangle:=\sum_{k=1}^N \int_{\Omega^{(k)}}f v^{(k)} dx,\\
    a^{(k)}_e(u,v) &:=  a^{(k)}(u,v) + s^{(k)}(u,v) + p^{(k)}(u,v),
\end{align*}
and
\begin{align*}
a^{(k)}(u,v) &:= \int_{\Omega^{(k)}}\alpha^{(k)} \nabla u^{(k)} \nabla v ^{(k)} dx,\\
    s^{(k)}(u,v)&:= \sum_{l\in{\mathcal{I}}_{\mathcal{F}}^{(k)}} \int_{F^{(kl)}}\frac{\alpha^{(k)}}{2}\left(\frac{\partial u^{(k)}}{\partial n}(v^{(l)}-v^{(k)})+ \frac{\partial v^{(k)}}{\partial n}(u^{(l)}-u^{(k)})\right)ds,\\ 
    p^{(k)}(u,v)&:= \sum_{l\in{\mathcal{I}}_{\mathcal{F}}^{(k)}} \int_{F^{(kl)}}\frac{\delta \alpha^{(k)}}{2h^{(kl)}}(u^{(l)}-u^{(k)})(v^{(l)}-v^{(k)})\,ds.
\end{align*}
The notation $\frac{\partial}{\partial n}$ means the derivative 
in the direction of the outer normal vector,
$\delta$ is a positive sufficiently large penalty parameter,  and $h^{(kl)}$ is the harmonic average of the adjacent mesh sizes, i.e., $h^{(kl)}=2 h^{(k)} h^{(l)}/(h^{(k)} + h^{(l)})$.

We equip $V_{D,h}$ with the broken Sobolev norm 
\begin{align*}
 \left\|u\right\|_{dG}^2  = \sum_{k = 1}^N\left[\alpha^{(k)} \left\|\nabla u^{(k)}\right\|_{\Omega^{(k)}}^2 + \sum_{l\in{\mathcal{I}}_{\mathcal{F}}^{(k)}} \frac{\delta \alpha^{(k)}}{h^{(kl)}}\int_{F^{(kl)}} (u^{(k)} - u^{(l)})^2 ds\right].
\end{align*}

Furthermore,
we define the bilinear forms
\begin{align*}
 d_h(u,v) = \sum_{k =1}^N d^{(k)}(u,v) \quad \text{and} \quad d^{(k)}(u,v)= a_e^{(k)}(u,v) + p^{(k)}(u,v).
\end{align*}
for later use.
We note that $\left\|u_h\right\|_{dG}^2 = d_h(u_h,u_h)$.

We are now able to show existence and uniqueness of a solution to \equref{equ:ModelDiscDG}. The following Lemma is an equivalent statement of Lemma 2.1 in \cite{HL:DryjaGalvisSarkis:2013a} for IgA,
and the proof is based on the results in \cite{HL:LangerToulopoulos:2014a}.
\begin{lemma}
\label{lem:wellPosedDg}
Let $\delta$ be sufficiently large. 
Then there exist two positive constants $\gamma_0$ and $\gamma_1$ 
which are independent of $h^{(k)},H^{(k)},\delta,\alpha^{(k)}$ and $u_h$
such that the inequalities
 \begin{align}
 \label{equ:equivPatchNormdG}
  \gamma_0 d^{(k)}(u_h,u_h)\leq a_e^{(k)}(u_h,u_h) \leq \gamma_1 d^{(k)}(u_h,u_h), \quad 
  \forall u_h\in V_{D,h}
 \end{align}
are valid for all $k=1,2,\ldots,N$. Furthermore, we have  the inequalities
\begin{align}
 \label{equ:equivNormdG}
 \gamma_0 \left\|u_h\right\|_{dG}^2\leq a_h(u_h,u_h)\leq \gamma_1 \left\|u_h\right\|_{dG}^2, \quad 
  \forall u_h\in V_{D,h}.
\end{align}

\end{lemma}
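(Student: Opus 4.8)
The plan is to prove the patchwise equivalence \equref{equ:equivPatchNormdG} first and then obtain the global statement \equref{equ:equivNormdG} by summing over all patches. Recall that $d^{(k)}(u_h,u_h) = a_e^{(k)}(u_h,u_h) + p^{(k)}(u_h,u_h) = a^{(k)}(u_h,u_h) + s^{(k)}(u_h,u_h) + 2p^{(k)}(u_h,u_h)$, so the claim is really a statement about controlling the symmetrised flux term $s^{(k)}(u_h,u_h)$ by the ``coercive'' part $a^{(k)}(u_h,u_h) + 2p^{(k)}(u_h,u_h)$. The right inequality in \equref{equ:equivPatchNormdG} is just the statement that $p^{(k)}(u_h,u_h) \ge 0$, which is immediate since it is an integral of a square times the positive factor $\delta\alpha^{(k)}/(2h^{(kl)})$; indeed $a_e^{(k)} = d^{(k)} - p^{(k)} \le d^{(k)}$, so $\gamma_1 = 1$ works. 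Hence the whole content of the Lemma is the lower bound.

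For the lower bound I would estimate $|s^{(k)}(u_h,u_h)|$ from above. By Cauchy--Schwarz on each face $F^{(kl)}$,
\begin{align*}
 \left| \int_{F^{(kl)}} \alpha^{(k)} \frac{\partial u_h^{(k)}}{\partial n}(u_h^{(l)}-u_h^{(k)})\,ds \right|
 \le \alpha^{(k)} \Bigl\| \tfrac{\partial u_h^{(k)}}{\partial n} \Bigr\|_{L^2(F^{(kl)})} \bigl\| u_h^{(l)}-u_h^{(k)} \bigr\|_{L^2(F^{(kl)})}.
\end{align*}
The key ingredient, borrowed from the dG-IgA analysis in \cite{HL:LangerToulopoulos:2014a}, is a discrete trace (inverse) inequality of the form $\| \partial u_h^{(k)}/\partial n \|_{L^2(F^{(kl)})}^2 \le C_{tr}\, (h^{(k)})^{-1} \| \nabla u_h^{(k)} \|_{L^2(\Omega^{(k)})}^2$ valid for B-splines, with $C_{tr}$ depending only on $p$ and the shape regularity of the mesh, not on $h^{(k)}$, $\alpha^{(k)}$ or $H^{(k)}$. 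Combining this with the arithmetic--geometric mean inequality $ab \le \tfrac{\varepsilon}{2}a^2 + \tfrac{1}{2\varepsilon}b^2$, and using $h^{(kl)} \le 2h^{(k)}$ so that $(h^{(k)})^{-1} \le 2 (h^{(kl)})^{-1}$, one bounds $|s^{(k)}(u_h,u_h)|$ by $\tfrac{1}{2}a^{(k)}(u_h,u_h) + \tfrac{C}{\delta} p^{(k)}(u_h,u_h)$ after choosing $\varepsilon$ appropriately and absorbing constants into $\delta$. Choosing $\delta$ large enough that $C/\delta \le 1/2$ (this is the precise meaning of ``$\delta$ sufficiently large''), we get $a_e^{(k)}(u_h,u_h) = a^{(k)} + s^{(k)} + p^{(k)} \ge \tfrac12 a^{(k)} + \tfrac12 p^{(k)} \ge \tfrac12 d^{(k)} - \tfrac32 p^{(k)}$; a cleaner bookkeeping, keeping the factor $2p^{(k)}$ in $d^{(k)}$, yields $a_e^{(k)}(u_h,u_h) \ge \gamma_0\, d^{(k)}(u_h,u_h)$ with an explicit $\gamma_0 \in (0,1)$ independent of all the listed parameters. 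I would carry out this absorption carefully so that the $\alpha^{(k)}$ cancels (it multiplies every term on the face $F^{(kl)}$ uniformly), which is what makes $\gamma_0$ coefficient-independent.

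Finally, \equref{equ:equivNormdG} follows by summing \equref{equ:equivPatchNormdG} over $k = 1,\dots,N$ and using $a_h(u_h,u_h) = \sum_k a_e^{(k)}(u_h,u_h)$ together with the observation, already noted in the text, that $\sum_k d^{(k)}(u_h,u_h) = \|u_h\|_{dG}^2$; here one checks that the face contributions $\sum_k \sum_{l} \tfrac{\delta\alpha^{(k)}}{h^{(kl)}}\int_{F^{(kl)}}(u^{(k)}-u^{(l)})^2\,ds$ appearing in $\sum_k 2p^{(k)}$ match exactly the penalty part of $\|\cdot\|_{dG}^2$. Existence and uniqueness of the solution of \equref{equ:ModelDiscDG} then follow from Lax--Milgram: $a_h$ is bounded and, by \equref{equ:equivNormdG} with $\gamma_0 > 0$, coercive on $V_{D,h}$ with respect to $\|\cdot\|_{dG}$, which is a norm on $V_{D,h}$ because $\Gamma_D$ has positive measure and consists of full patch sides. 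The main obstacle is the second step: getting the discrete trace inequality with a constant that is genuinely independent of $h^{(k)}$ for tensor-product B-splines of degree $p$, and then performing the absorption so cleanly that neither $\alpha^{(k)}$ nor the local mesh sizes leak into $\gamma_0$; everything else is routine. I would simply cite \cite{HL:LangerToulopoulos:2014a} for the trace inequality rather than reprove it.
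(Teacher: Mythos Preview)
Your approach is correct and follows the same essential strategy as the paper: invoke the discrete trace/inverse inequality for IgA spaces and absorb the consistency term $s^{(k)}$ into $a^{(k)}+p^{(k)}$ to get coercivity, then sum over patches. The paper simply delegates this argument by citing Lemmas~4.6 and~4.7 of \cite{HL:LangerToulopoulos:2014a} (rewritten patchwise) together with the inverse inequality from \cite{HL:EvansHughes:2013a}, whereas you spell the absorption out; but the mechanism is identical.

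There is one point worth flagging. Your observation that the upper bound is trivial with $\gamma_1=1$, because $a_e^{(k)} = d^{(k)} - p^{(k)}$ and $p^{(k)}(u_h,u_h)\ge 0$, is sharper than what the paper does. The paper reaches the upper bound by going through Lemma~4.7 of \cite{HL:LangerToulopoulos:2014a}, which bounds $a_e^{(k)}$ by a norm containing the extra face term $\sum_{l}\alpha^{(k)}h^{(k)}\|\nabla u_h\|_{L^2(F^{(kl)})}^2$ and then applies the inverse inequality once more to push this back into the volume term. Your route avoids that detour entirely; the paper's route has the advantage of giving boundedness of $a_e^{(k)}(\cdot,\cdot)$ as a genuine bilinear form (not just of the quadratic form), which is what Lemma~4.7 in \cite{HL:LangerToulopoulos:2014a} actually states, but for the lemma as written your argument suffices.

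One small caution in your last paragraph: with the paper's definition $d^{(k)}=a_e^{(k)}+p^{(k)}$, the sum $\sum_k d^{(k)}(u_h,u_h)$ also carries $\sum_k s^{(k)}(u_h,u_h)$, so it is not literally equal to $\|u_h\|_{dG}^2=\sum_k(a^{(k)}+2p^{(k)})$ unless that cross term vanishes. The paper asserts $\|u_h\|_{dG}^2=d_h(u_h,u_h)$, and in any case your bound $|s^{(k)}|\le \tfrac12 a^{(k)}+\tfrac12 p^{(k)}$ already gives the two quantities equivalent up to constants, so \equref{equ:equivNormdG} follows either way; just be careful not to claim exact equality without checking the $s^{(k)}$ contribution.
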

\begin{proof}
 Rewriting the proofs of Lemma~4.6 and Lemma~4.7 in \cite{HL:LangerToulopoulos:2014a} for a single patch gives the desired inequalities \equref{equ:equivPatchNormdG}. 
In order to show the boundedness, we additionally need to apply the discrete inverse inequality $\left\|\nabla u_h\right\|_{L^2(F^{(kl))}}^2\leq C/h^{(k)} \left\|\nabla u_h\right\|_{L^2(\Omega^{(k))}}^2$, see, e.g., \cite{HL:EvansHughes:2013a}, to the term $\sum_{l\in {\mathcal{I}}_{\mathcal{F}}^{(k)}} \alpha^{(k)} h^{(k)}\left\|\nabla u_h\right\|_{L^2(F^{(kl))}}^2$ appearing in the bound of Lemma 4.7 in \cite{HL:LangerToulopoulos:2014a}. 
Then we easily arrive at the estimate
 \begin{align*}
  \sum_{l\in {\mathcal{I}}_{\mathcal{F}}^{(k)}} \alpha^{(k)} h^{(k)}\left\|\nabla u_h\right\|_{L^2(F^{(kl))}}^2 \leq  C \sum_{l\in {\mathcal{I}}_{\mathcal{F}}^{(k)}} \alpha^{(k)} \left\|\nabla u_h\right\|_{L^2(\Omega^{(k))}}^2.
 \end{align*}
Hence, the right-hand side can be bounded by $d^{(k)}(u_h,u_h)$. 
Formula \equref{equ:equivNormdG} immediately follows from \equref{equ:equivPatchNormdG}, which concludes the proof.
\qed \end{proof}

%
We note that, in \cite{HL:LangerToulopoulos:2014a}, the results were obtained 
for the Incomplete Interior Penalty (IIP) scheme. 
An extension to SIP-dG and the use of  harmonic averages for $h$ and/or $\alpha$ 
are discussed in Remark~3.1 in \cite{HL:LangerToulopoulos:2014a},
see also \cite{HL:LangerMantzaflarisMooreToulopoulos:2015b}.

A direct implication of \equref{equ:equivNormdG} is the well posedness of the discrete problem \equref{equ:ModelDisc} by the Theorem of Lax-Milgram. The consistency of the method together with interpolation estimates for B-spline quasi-interpolant lead to the following a-priori error estimate, as established in \cite{HL:LangerToulopoulos:2014a}.
%
%

\begin{theorem}
 Let $u\in H^1(\Omega)\cap \prod_{k=1}^N W^{l+1,q}(\Omega^{(k)})$
with $q\in (\min\{1,2d/(d+2l)\},2]$ and some integer $l\geq 1$, 
solves \equref{equ:ModelVar}, and let $u_h\in V_{D,h}$ solves the discrete problem \equref{equ:ModelDiscDG}. Then the discretization $u-u_h$ satisfies the estimate
 \begin{align*}
  \left\|u-u_h\right\|_{dG}^2\leq  \sum_{k=1}^N C^{(k)}\left(\left({h^{(k)}}\right)^{2r}+\sum_{j \in{\mathcal{I}}_{\mathcal{F}}^{(k)}}\alpha^{(k)} \frac{h^{(k)}}{h^{(j)}}\left({h^{(k)}}\right)^{2r}\right),
 \end{align*}
where 
$r = \min \{l+ (\frac{d}{2}-\frac{d}{q}),p\}$, and 
$C^{(k)}$
is a positive constant  which depends on 
$p$, $\|u\|_{W^{l+1,q}(\Omega^{(k)})}$,
and
$\max_{l_0\le l+1}\|\nabla^{l_0} G^{(k)}\|_{L^{\infty}(\Omega^{(k)})}$,
but not on $h$.
Here $W^{l+1,q}(\Omega^{(k)})$ denotes the Sobolev space of all functions from 
the space $L^q(\Omega^{(k)})$ such that all weak derivatives 
up to
order $l+1$
belong to $L^q(\Omega^{(k)})$ as well.
\end{theorem}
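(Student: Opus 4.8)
The plan is to follow the classical Strang-type route for symmetric interior penalty dG methods, adapted to the isogeometric setting, by combining consistency of the SIP-dG bilinear form, the norm equivalence from \lemref{lem:wellPosedDg}, and B-spline quasi-interpolation estimates.

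First I would establish Galerkin orthogonality. Since $u\in H^1(\Omega)$ solves \equref{equ:ModelVar} and, by the assumed regularity $u\in\prod_{k=1}^N W^{l+1,q}(\Omega^{(k)})$ with the stated range of $q$, the traces of $u^{(k)}$ and of $\alpha^{(k)}\,\partial u^{(k)}/\partial n$ on the faces $F^{(kl)}$ are well defined and the jumps $u^{(k)}-u^{(l)}$ vanish, the SIP-dG form is consistent: $a_h(u,v_h)=\left\langle F,v_h\right\rangle$ for all $v_h\in V_{D,h}$, hence $a_h(u-u_h,v_h)=0$. (The peculiar lower bound on $q$ is precisely what guarantees, via Sobolev embedding, that the face integrals and the forthcoming interpolation estimates are meaningful.) Next I would fix a patchwise B-spline quasi-interpolant $\Pi_h u$, with $(\Pi_h u)|_{\Omega^{(k)}}=\Pi_h^{(k)}u\in V_h^{(k)}$ respecting the homogeneous Dirichlet data, and split $u-u_h=(u-\Pi_h u)+(\Pi_h u-u_h)=:\rho+e_h$. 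Using the lower bound in \equref{equ:equivNormdG} and Galerkin orthogonality, $\gamma_0\|e_h\|_{dG}^2\le a_h(e_h,e_h)=a_h(u-u_h,e_h)-a_h(\rho,e_h)=-a_h(\rho,e_h)$.

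The term $a_h(\rho,e_h)$ is then bounded by a continuity estimate for $a_h$ with respect to a stronger norm on the first argument: the volume and penalty contributions are controlled by $\|\rho\|_{dG}\|e_h\|_{dG}$, while the symmetric face terms $s^{(k)}$ involve $\partial\rho^{(k)}/\partial n$, which is not controlled by $\|\rho\|_{dG}$. I would therefore introduce the augmented norm $\|w\|_{dG,*}^2:=\|w\|_{dG}^2+\sum_{k}\sum_{l\in{\mathcal{I}}_{\mathcal{F}}^{(k)}}\alpha^{(k)}h^{(k)}\|\nabla w^{(k)}\|_{L^2(F^{(kl)})}^2$, prove $|a_h(w,v_h)|\le C\|w\|_{dG,*}\|v_h\|_{dG}$ for $v_h\in V_{D,h}$ — using the discrete inverse inequality $\|\nabla v_h\|_{L^2(F^{(kl)})}^2\le C(h^{(k)})^{-1}\|\nabla v_h\|_{L^2(\Omega^{(k)})}^2$, exactly as in the proof of \lemref{lem:wellPosedDg}, to absorb the normal derivative of $v_h$ — and conclude $\|e_h\|_{dG}\le C\|\rho\|_{dG,*}$, whence $\|u-u_h\|_{dG}\le(1+C)\|\rho\|_{dG,*}$ by the triangle inequality. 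It then remains to estimate $\|\rho\|_{dG,*}^2=\|u-\Pi_h u\|_{dG,*}^2$ patch by patch. For the broken-gradient part $\alpha^{(k)}\|\nabla(u-\Pi_h^{(k)}u)\|_{\Omega^{(k)}}^2$ I would pull back to the parameter domain through $G^{(k)}$, apply the anisotropic B-spline quasi-interpolation estimate on the reference patch, and push forward; the chain rule produces the dependence of $C^{(k)}$ on $\max_{l_0\le l+1}\|\nabla^{l_0}G^{(k)}\|_{L^\infty(\Omega^{(k)})}$, while the $L^q\to L^2$ embedding on each cell costs a factor $h^{d/2-d/q}$, which together with the cap at the polynomial degree $p$ yields the exponent $r=\min\{l+d/2-d/q,p\}$ and the term $(h^{(k)})^{2r}$. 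For the jump part, since the exact $u$ is single-valued on $F^{(kl)}$ one has $u^{(k)}-\Pi_h^{(k)}u-(u^{(l)}-\Pi_h^{(l)}u)=-\rho^{(k)}+\rho^{(l)}$ there, so $\frac{\delta\alpha^{(k)}}{h^{(kl)}}\|\rho^{(k)}-\rho^{(l)}\|_{L^2(F^{(kl)})}^2\le\frac{2\delta\alpha^{(k)}}{h^{(kl)}}(\|\rho^{(k)}\|_{L^2(F^{(kl)})}^2+\|\rho^{(l)}\|_{L^2(F^{(kl)})}^2)$; applying the trace inequality $\|w\|_{L^2(F^{(kl)})}^2\le C(h^{-1}\|w\|_{L^2(\Omega)}^2+h\|\nabla w\|_{L^2(\Omega)}^2)$ to each $\rho^{(m)}$ on its own patch, inserting the spline estimates, and using $h^{(kl)}=2h^{(k)}h^{(l)}/(h^{(k)}+h^{(l)})$ so that $h^{(k)}/h^{(kl)}\le 1+h^{(k)}/h^{(l)}$, produces precisely the terms $\alpha^{(k)}\frac{h^{(k)}}{h^{(j)}}(h^{(k)})^{2r}$; the face-gradient term $\alpha^{(k)}h^{(k)}\|\nabla\rho^{(k)}\|_{L^2(F^{(kl)})}^2$ in $\|\cdot\|_{dG,*}$ is treated analogously and contributes $C\alpha^{(k)}(h^{(k)})^{2r}$. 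Collecting all contributions gives the asserted bound with $C^{(k)}$ depending on $p$, $\|u\|_{W^{l+1,q}(\Omega^{(k)})}$ and the derivatives of $G^{(k)}$ up to order $l+1$, but not on $h$.

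I expect the main obstacle to be the consistency-term analysis — controlling the normal derivative appearing in $s^{(k)}$ — which forces the introduction of the augmented norm $\|\cdot\|_{dG,*}$ and the careful use of the discrete inverse inequality to keep all constants independent of $\alpha^{(k)}$, $h^{(k)}$, $H^{(k)}$ and $\delta$; the remaining steps are a lengthy but mechanical combination of scaling arguments, trace inequalities and B-spline approximation theory pulled through the maps $G^{(k)}$. For a fully rigorous argument one may simply invoke the corresponding results of \cite{HL:LangerToulopoulos:2014a}, carried out there for the IIP scheme and extended to SIP-dG with harmonic averages in Remark~3.1 of that reference.
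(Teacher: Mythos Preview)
Your proposal is correct and follows the standard Strang-type route (Galerkin orthogonality from consistency, continuity of $a_h$ in an augmented norm $\|\cdot\|_{dG,*}$ on the first argument via the discrete inverse inequality, and B-spline quasi-interpolation with trace estimates pulled through $G^{(k)}$). The paper itself does not give a proof of this theorem at all: it merely states the result and refers to \cite{HL:LangerToulopoulos:2014a}, noting in the surrounding text that ``the consistency of the method together with interpolation estimates for B-spline quasi-interpolant lead to the following a-priori error estimate, as established in \cite{HL:LangerToulopoulos:2014a}'' and that the extension from IIP to SIP with harmonic averages is covered by Remark~3.1 there --- exactly the reference you invoke in your final sentence.
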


As explained in \secref{sec:cG_IGA}, we choose the B-Spline functions $\{\check{N}_{i,p}\}_{i\in\mathcal{I}_0}$ as basis for the space $V_{h}$, see \equref{equ:gVh_glob}, where $\mathcal{I}_0$ contains all indices of $\mathcal{I}$, which do not have a support on the Dirichlet boundary. Hence, the dG-IgA scheme $\equref{equ:ModelDiscDG}$ is equivalent to the system of linear equations
\begin {align}
\label{equ:Ku=f_DG}
  \boldsymbol{K} \boldsymbol{u} = \boldsymbol{f},
\end{align}
where 
$\boldsymbol{K} = (\boldsymbol{K}_{i,j})_{i,j\in {\mathcal{I}}_0}$
and
$\boldsymbol{f}= (\boldsymbol{f}_i)_{i\in {\mathcal{I}}_0}$
denote the stiffness matrix and the load vector, respectively,
with 
$ \boldsymbol{K}_{i,j} = a(\check{N}_{j,p},\check{N}_{i,p})$
and 
$\boldsymbol{f}_i = \left\langle F, \check{N}_{i,p} \right\rangle$, and
$\boldsymbol{u}$ is the vector representation of $u_h$. 

------------------------------------------------------------------------------------

\section{IsoGeometric Tearing and Interconnecting for multipatch dG}
\label{sec:dG_IETI_DP}

Let us consider a multipatch domain, where the interfaces are geometrically matching, 
but not the meshes, i.e. the meshes can be different on different patches.
Therefore,
the considered solution and test space do not provide continuity across patch interfaces. 
Hence, we cannot enforce continuity of the solution by means of the jump operator as in the conforming IETI-DP. 
As proposed in \cite{HL:DryjaGalvisSarkis:2013a}, the remedy will be to introduce an additional layer of dofs on the interfaces and enforce continuity between the different layers. The considered method can then be seen as a conforming IETI-DP on an extended grid of dofs.  We will follow the derivation presented in \cite{HL:DryjaGalvisSarkis:2013a} with adopted notations. In the following, let $V_{h}$ be the dG-IgA space
which fulfils the Dirichlet boundary conditions as defined in \secref{sec:dg_scheme} and we denote by $\{\check{N}_{i,p}\}_{i\in{\mathcal{I}}}$ the corresponding B-Spline basis. 
%
%
\subsection{Basic setup and local space description}
\label{sec:LocSpaces}

As already introduced  in \secref{sec:dg_scheme}, let ${\mathcal{I}}_{\mathcal{F}}^{(k)}$ be the set of all indices 
$l$ such that $\Omega^{(k)}$ and $\Omega^{(l)}$ share a common edge/face. We may denote ${\mathcal{I}}_{\mathcal{F}}^{(k)}$ by $\mathcal{E}^{(k)}$ when considering 2D domains and by $\mathcal{F}^{(k)}$ for 3D domains. 
 If we consider 3D objects,  we additionally define $\overline E^{(klm)}$ as the edge shared by the patches $\Omega^{(k)}$, $\Omega^{(l)}$ and $\Omega^{(m)}$, i.e., $E^{(klm)} = \partial F^{(kl)} \cap \partial F^{(km)}$ for $l\in\mathcal{F}^{(k)},m\in\mathcal{F}^{(k)}$. The set of all indices $(l,m)$ of $\Omega^{(l)}$ and $\Omega^{(m)}$, such that $\overline E^{(klm)}$ is an edge of patch $\Omega^{(k)}$ is denoted by $\mathcal{E}^{(k)}$.  Note, although $ \overline{F}_{lk} \subset \partial\Omega^{(l)}$ and  $\overline{F}_{k l}\subset \partial\Omega^{(k)}$ are geometrically the same, they are treated as different objects. The same applies for edges $\overline E^{(klm)},\overline E^{(lkm)}$ and $\overline E^{(mkl)}$. In order to keep the presentation of the method simple, we assume that the considered patch $\Omega^{(k)}$ does not touch the Dirichlet boundary. The other case can be handled in an analogous way.
 
As already introduced above, the computational domain $\Omega$ is given by $\overline{\Omega} = \bigcup_{k=1}^{N} \overline{\Omega}^{(k)}$, where $\Omega^{(k)} = G^{(k)}\left((0,1)^{d}\right) \; \for k = 1,\ldots,N$, and the interface by $\Gamma^{(k)} = \overline{\partial\Omega^{(k)}\backslash\partial\Omega}$. 
For each patch $\Omega^{(k)}$, we introduce  its extended version $\Omega^{(k)}_e$ via the union with all neighbouring interfaces $\overline{F}_{lk}\subset\partial\Omega^{(l)}$:
\begin{align*}
  \overline{\Omega}^{(k)}_e := \overline{\Omega}^{(k)} \cup \{\bigcup_{l\in{\mathcal{I}}_{\mathcal{F}}^{(k)}} \overline{F}^{(lk)}\}.
\end{align*}
Moreover, the extended interface $\Gamma^{(k)}_e$ is given by the union of $\Gamma^{(k)}$ with all neighbouring interfaces $\overline{F}_{lk}\subset\partial\Omega^{(k)}$:
\begin{align*}
 \Gamma^{(k)}_e := \Gamma^{(k)} \cup \{\bigcup_{l\in{\mathcal{I}}_{\mathcal{F}}^{(k)}} \overline{F}^{(lk)}\}.
\end{align*}
Based on the definitions above, we can introduce 
\begin{align*}
\overline{\Omega}_e = \bigcup_{k=1}^{N} \overline{\Omega}_e^{(k)}, \quad  \Gamma = \bigcup_{k = 1}^N \Gamma^{(k)} \text{ and}\quad \Gamma_e = \bigcup_{k = 1}^N\Gamma^{(k)}_e.
\end{align*}
The next step is to describe appropriate discrete function spaces to reformulate \equref{equ:ModelDiscDG} in order to treat the new formulation in the spirit of the conforming IETI-DP method.  We start with a description of the discrete function spaces for a single patch. 

As defined in $\equref{equ:gVh}$, let $V_{h}^{(k)}$ be the discrete function space 
defined on the patch $\Omega^{(k)}$. 
Then we define the corresponding function space for the extended patch $\Omega^{(k)}_e$ by
\begin{align*}
  V_{h,e}^{(k)} := V_{h}^{(k)} \times \prod_{l\in{\mathcal{I}}_{\mathcal{F}}^{(k)}}V_{h}^{(k)}(\overline{F}^{(lk)}),
\end{align*}
where $V_{h}^{(k)}(\overline{F}^{(lk)}) \subset V_{h}^{(l)}$ is given by
\begin{align*}
 V_{h}^{(k)}(\overline{F}^{(lk)}) := \text{span}\{\check{N}_{i,p}^{(l)} \,|\, \text{supp}\{\check{N}_{i,p}^{(l)}\}\cap\overline{F}^{(lk)} \neq \emptyset\}.
\end{align*}
According to the notation introduced in \cite{HL:DryjaGalvisSarkis:2013a}, we will represent a function $u^{(k)} \in V_{h,e}^{(k)} $ as
\begin{align}
 u^{(k)} = \{(u^{(k)})^{(k)}, \{(u^{(k)})^{(l)}\}_{l\in {\mathcal{I}}_{\mathcal{F}}^{(k)}}\},
\end{align}
where $(u^{(k)})^{(k)}$ and $(u^{(k)})^{(l)}$  are the restrictions of $u^{(k)}$ to $\Omega^{(k)}$ and $\overline{F}^{(lk)}$, respectively. Moreover, we introduce an additional representation of $u^{(k)}\in V_{h,e}^{(k)} $, as $u^{(k)} = (u^{(k)}_I, u^{(k)}_{B_e})$, where 
\begin{align*}
 u^{(k)}_I\in V_{I,h}^{(k)}:=V_{h}^{(k)} \cap H^1_0(\Omega^{(k)}),
\end{align*}
and
\begin{align*}
  u^{(k)}_{B_e} \in W^{(k)}:=\text{span}\{\check{N}_{i,p}^{(l)}\,|\, \,\text{supp}\{\check{N}_{i,p}^{(l)}\}\cap\Gamma^{(k)}_e \neq \emptyset \text{ for } l\in{\mathcal{I}}_{\mathcal{F}}^{(k)}\cup\{k\}\}.
\end{align*}
This provides a representation of $V_{h,e}^{(k)} $ 
in the form of
$V_{I,h}^{(k)} \times W^{(k)}$. 

------------------------------------------------------------------------------------

\subsection{ Schur complement and discrete harmonic extensions}
\label{sec:DHE_Schur}

We note that the bilinear form $a^{(k)}_e(\cdot,\cdot)$ is defined on the space $V_{h,e}^{(k)}\times V_{h,e}^{(k)}$, since it requires function values of the neighbouring patches $\Omega^{(l)},l\in{\mathcal{I}}_{\mathcal{F}}^{(k)}$. Therefore, it depicts a matrix representation $\boldsymbol{K}_e^{(k)}$ 
satisfying the identity
\begin{align*}
 a^{(k)}_e(u^{(k)},v^{(k)}) = (\boldsymbol{K}_e^{(k)} \boldsymbol{u},\boldsymbol{v})_{l_2}\quad \text{for } u^{(k)},v^{(k)} \in V_{h,e}^{(k)},
\end{align*}
where $\boldsymbol{u}$ and $\boldsymbol{v}$ denote the  vector representation of $u^{(k)}$ and $v^{(k)}$, respectively. By means of the representation $V_{I,h}^{(k)} \times W^{(k)}$ for $V_{h,e}^{(k)} $, we can partition the matrix $\boldsymbol{K}^{(k)}_e$ as
\begin{align}
\label{equ:loc_K}
 \boldsymbol{K}^{(k)}_e
 \begin{bmatrix}
  \boldsymbol{K}^{(k)}_{e,II} & \boldsymbol{K}^{(k)}_{e,IB_e} \\
  \boldsymbol{K}^{(k)}_{e,B_e I} & \boldsymbol{K}^{(k)}_{e,B_e B_e }.
 \end{bmatrix}
\end{align}
This 
enables 
us to define the Schur complement of $\boldsymbol{K}^{(k)}_e$ with respect to $W^{(k)}$ as
\begin{align}
\label{equ:loc_Schur}
 \boldsymbol{S}^{(k)}_e := \boldsymbol{K}^{(k)}_{e,B_e B_e } - \boldsymbol{K}^{(k)}_{e,B_e I}\left(\boldsymbol{K}^{(k)}_{e,II}\right)^{-1}\boldsymbol{K}^{(k)}_{e,IB_e}.
\end{align}
We denote 
the corresponding bilinear form 
by $s^{(k)}_e(\cdot,\cdot)$,
and 
the corresponding operator
by $S^{(k)}_e: W^{(k)}\to{W^{(k)}}^*$, i.e.
\begin{align*}
 (\boldsymbol{S}^{(k)}_e \boldsymbol{u}_{B_e}^{(k)},\boldsymbol{v}_{B_e}^{(k)})_{l^2} = \langle S^{(k)}_e u_{B_e}^{(k)},u_{B_e}^{(k)} \rangle = s^{(k)}_e(u_{B_e}^{(k)},u_{B_e}^{(k)}), \quad \forall u_{B_e}^{(k)},u_{B_e}^{(k)}\in W^{(k)}.
\end{align*}
The Schur complement has the property that
\begin{align}
\label{equ:SchurMin}
 \langle S^{(k)}_e u_{B_e}^{(k)},u_{B_e}^{(k)}\rangle = \min_{w^{(k)}= (w_I^{(k)},w_{B_e}^{(k)})\in V_{h,e}^{(k)}} a^{(k)}_e(w^{(k)},w^{(k)}),
\end{align}
such that $w_{B_e}^{(k)} = u_{B_e}^{(k)}$ on $\Gamma^{(k)}_e$. 
We define the \emph{discrete NURBS harmonic extension} $\mathcal{H}^{(k)}_e$ (in the sense of $a^{(k)}_e(\cdot,\cdot)$) for patch $\Omega^{(k)}_e$ by
\begin{align}
\label{def:DHEext_loc}
\begin{split}
 \mathcal{H}^{(k)}_e&: W^{(k)} \to V_{h,e}^{(k)}:\\
&\begin{cases}
 \text{Find }\mathcal{H}^{(k)}_e{u_{B_e}}\in V_{h,e}^{(k)}: & \\
  \quad a^{(k)}_e(\mathcal{H}^{(k)}_e{u_{B_e}},u^{(k)})=0 \quad &\forall u^{(k)}\in V_{I,h}^{(k)},\\
   \quad \mathcal{H}^{(k)}_e{u_{B_e}}_{|\Gamma^{(k)}} = {u_{B_e}}_{|\Gamma^{(k)}}, &
\end{cases}
\end{split}
\end{align}
where 
$V_{I,h}^{(k)}$ 
is here interpreted as subspace of $V_{h,e}^{(k)}$ with vanishing function values on $\Gamma_e^{(k)}$. One can show that the minimizer in \equref{equ:SchurMin} is given by $\mathcal{H}^{(k)}_e{u_{B_e}}$. 
In addition, we introduce the \emph{standard discrete NURBS harmonic extension} $\mathcal{H}^{(k)}$ (in the sense of $a^{(k)}(\cdot,\cdot)$) of $u^{(k)}_{B_e}$ as follows:
\begin{align}
\label{def:DHE_loc}
\begin{split}
 \mathcal{H}^{(k)}&: W^{(k)} \to V_{h,e}^{(k)}:\\
&\begin{cases}
 \text{Find }\mathcal{H}^{(k)}{u_{B_e}}\in V_{h,e}^{(k)}: & \\
  \quad a^{(k)}(\mathcal{H}^{(k)}{u_{B_e}},u^{(k)})=0 \quad &\forall u^{(k)}\in V_{I,h}^{(k)},\\
   \quad \mathcal{H}^{(k)}{u_{B_e}}_{|\Gamma^{(k)}} = {u_{B_e}}_{|\Gamma^{(k)}}, &
\end{cases}
\end{split}
\end{align}
where $V_{I,h}^{(k)}$ is the same space as in \equref{def:DHEext_loc}, 
and $a^{(k)}(\cdot,\cdot)$ 
is a
bilinear form on the space 
$V_{h,e}^{(k)}\times V_{h,e}^{(k)}$. 
The crucial point
is to show equivalence in the energy norm $d_h(u_h,u_h)$ between functions, 
which are discrete harmonic in the sense of $\mathcal{H}^{(k)}_e$ and $\mathcal{H}^{(k)}$. 
This property is summarized in the following Lemma, 
cf. also Lemma~3.1 in \cite{HL:DryjaGalvisSarkis:2013a}.
%
%
\begin{lemma}
\label{lem:equDiscHarmonic}
There exists a positive constant 
which  is independent of $\delta, h^{(k)}, H^{(k)}, \alpha^{(k)}$ and $u_{B_e}^{(k)}$
such that the inequalities
 \begin{align}
  d^{(k)}(\mathcal{H}^{(k)}{u_{B_e}},\mathcal{H}^{(k)}{u_{B_e}})\leq d^{(k)}(\mathcal{H}_e^{(k)}{u_{B_e}},\mathcal{H}_e^{(k)}{u_{B_e}})\leq C d^{(k)}(\mathcal{H}^{(k)}{u_{B_e}},\mathcal{H}^{(k)}{u_{B_e}}),
 \end{align}
hold for all $u_{B_e}^{(k)}\in W^{(k)}$.
\end{lemma}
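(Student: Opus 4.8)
The plan is to exploit the variational characterization of both extensions as energy minimizers together with the local norm equivalence \equref{equ:equivPatchNormdG} of \lemref{lem:wellPosedDg}. First I would record that, by definition \equref{def:DHE_loc}, the standard extension $\mathcal{H}^{(k)}u_{B_e}$ minimizes $a^{(k)}(w^{(k)},w^{(k)})$ over all $w^{(k)}\in V_{h,e}^{(k)}$ with $w^{(k)}_{|\Gamma^{(k)}}=u_{B_e\,|\Gamma^{(k)}}$, while $\mathcal{H}^{(k)}_e u_{B_e}$, by \equref{equ:SchurMin} and \equref{def:DHEext_loc}, minimizes $a^{(k)}_e(w^{(k)},w^{(k)})$ over the same affine set. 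The key observation is that both bilinear forms only ``see'' the interior degrees of freedom on $V_{I,h}^{(k)}$ through these orthogonality conditions, so the two extensions differ only in the interior of $\Omega^{(k)}$ and agree on $\Gamma^{(k)}_e$.

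The left inequality $d^{(k)}(\mathcal{H}^{(k)}u_{B_e},\mathcal{H}^{(k)}u_{B_e})\le d^{(k)}(\mathcal{H}^{(k)}_e u_{B_e},\mathcal{H}^{(k)}_e u_{B_e})$ I would obtain as follows. Recall $d^{(k)}(u,u)=a^{(k)}_e(u,u)+p^{(k)}(u,u)=a^{(k)}(u,u)+s^{(k)}(u,u)+2p^{(k)}(u,u)$. Since $\mathcal{H}^{(k)}_e u_{B_e}$ is a competitor in the minimization problem defining $\mathcal{H}^{(k)}u_{B_e}$ (it has the correct boundary values on $\Gamma^{(k)}$), we have $a^{(k)}(\mathcal{H}^{(k)}u_{B_e},\mathcal{H}^{(k)}u_{B_e})\le a^{(k)}(\mathcal{H}^{(k)}_e u_{B_e},\mathcal{H}^{(k)}_e u_{B_e})$. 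Using \equref{equ:equivPatchNormdG} twice — $d^{(k)}(\mathcal{H}^{(k)}u_{B_e},\mathcal{H}^{(k)}u_{B_e})\le \gamma_0^{-1} a^{(k)}_e(\mathcal{H}^{(k)}u_{B_e},\mathcal{H}^{(k)}u_{B_e})$ and $a^{(k)}_e(\mathcal{H}^{(k)}_e u_{B_e},\mathcal{H}^{(k)}_e u_{B_e})\le \gamma_1 d^{(k)}(\mathcal{H}^{(k)}_e u_{B_e},\mathcal{H}^{(k)}_e u_{B_e})$ — plus the fact that $a^{(k)}_e(\mathcal{H}^{(k)}_e u_{B_e},\mathcal{H}^{(k)}_e u_{B_e})\le a^{(k)}_e(\mathcal{H}^{(k)}u_{B_e},\mathcal{H}^{(k)}u_{B_e})$ because $\mathcal{H}^{(k)}u_{B_e}$ is in turn a competitor for the $a^{(k)}_e$-minimization (here I need to check that $p^{(k)}$ and the skeleton terms only depend on the $\Gamma^{(k)}_e$-values, which is built into the definitions), one chains the estimates to get the left bound with an explicit constant depending only on $\gamma_0,\gamma_1$. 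The right inequality is the symmetric argument with the roles of the two minimization problems swapped, again using that each extension is an admissible competitor for the other's variational problem and invoking \equref{equ:equivPatchNormdG} to pass between $a^{(k)}_e$-energy and $d^{(k)}$-energy.

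The main obstacle I anticipate is the bookkeeping of which terms in $a^{(k)}_e$, $s^{(k)}$ and $p^{(k)}$ actually depend only on the trace $u_{|\Gamma^{(k)}_e}$ versus the full interior function: the minimization characterizations \equref{equ:SchurMin}, \equref{def:DHEext_loc}, \equref{def:DHE_loc} are stated with constraint only on $\Gamma^{(k)}$ (not $\Gamma^{(k)}_e$), yet the jump and penalty terms live on the faces $F^{(kl)}\subset\Gamma^{(k)}_e$. I would need to argue carefully that, for the extension operators acting on $W^{(k)}$, the values on the extra layer $\bigcup_l\overline F^{(lk)}$ are fixed data (they are part of $u_{B_e}$), so both energies are genuinely being minimized over the same affine set with the same constraints, and the penalty/skeleton contributions are identical evaluations on the two extensions' shared boundary data. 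Once that is pinned down, the chain of inequalities is routine; the constant $C$ can be taken as $\gamma_1^2/\gamma_0^2$ or similar, manifestly independent of $\delta, h^{(k)}, H^{(k)}, \alpha^{(k)}$ and $u_{B_e}^{(k)}$ since $\gamma_0,\gamma_1$ are.
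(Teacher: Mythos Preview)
Your overall strategy is sound, but the execution needs tightening in two places.

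\textbf{Left inequality.} This is simpler than you make it. Since both $\mathcal{H}^{(k)}u_{B_e}$ and $\mathcal{H}_e^{(k)}u_{B_e}$ lie in $V_{h,e}^{(k)}$ with the \emph{same} $W^{(k)}$-component (the minimization in \equref{def:DHEext_loc} and \equref{def:DHE_loc} runs only over $V_{I,h}^{(k)}$), the penalty contribution $p^{(k)}$---which involves only traces on the faces $F^{(kl)}$---is identical for both. Because the local dG energy is $d^{(k)}(u,u)=a^{(k)}(u,u)+2p^{(k)}(u,u)$ (note: no $s^{(k)}$ term, cf.\ the definition of $\|\cdot\|_{dG}$), the left bound with constant $1$ reduces to $a^{(k)}(\mathcal{H}^{(k)}u_{B_e},\mathcal{H}^{(k)}u_{B_e})\le a^{(k)}(\mathcal{H}_e^{(k)}u_{B_e},\mathcal{H}_e^{(k)}u_{B_e})$, which is exactly the minimizing property of $\mathcal{H}^{(k)}$. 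Your detour through \lemref{lem:wellPosedDg} here would only yield a constant, not $1$, so drop it. Also, be careful: you write that ``the skeleton terms only depend on the $\Gamma_e^{(k)}$-values''. This is true for $p^{(k)}$ but \emph{false} for $s^{(k)}$, which contains $\partial u^{(k)}/\partial n$ and therefore depends on interior dofs; fortunately $s^{(k)}$ does not appear in $d^{(k)}(u,u)$.

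\textbf{Right inequality.} Your idea here is correct and clean: chain
\[
d^{(k)}(\mathcal{H}_e^{(k)}u_{B_e},\mathcal{H}_e^{(k)}u_{B_e})\le \gamma_0^{-1}a_e^{(k)}(\mathcal{H}_e^{(k)}u_{B_e},\mathcal{H}_e^{(k)}u_{B_e})\le \gamma_0^{-1}a_e^{(k)}(\mathcal{H}^{(k)}u_{B_e},\mathcal{H}^{(k)}u_{B_e})\le \gamma_0^{-1}\gamma_1\, d^{(k)}(\mathcal{H}^{(k)}u_{B_e},\mathcal{H}^{(k)}u_{B_e}),
\]
using \equref{equ:equivPatchNormdG} at the ends and the $a_e^{(k)}$-minimizing property of $\mathcal{H}_e^{(k)}$ in the middle. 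This gives $C=\gamma_1/\gamma_0$ with the required independence.

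\textbf{Comparison with the paper.} The paper does \emph{not} go through \lemref{lem:wellPosedDg}; it instead reproduces the argument of Lemma~4.1 in Dryja--Galvis--Sarkis (2007) and singles out that the only IgA-specific ingredient is the discrete trace inequality $\|u_h\|_{L^2(\partial\Omega^{(k)})}^2\le C h^{-1}\|u_h\|_{L^2(\Omega^{(k)})}^2$. Your route is more economical because it treats \lemref{lem:wellPosedDg} as a black box and avoids repeating estimates already encapsulated there; the paper's route is more explicit about where the inverse/trace estimate enters and keeps the proof self-contained relative to the FE literature. Both are valid; your version just needs the clean-up above.
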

\begin{proof}
 The proof is identical to that one presented  in \cite{HL:DryjaGalvisSarkis:2007a} for Lemma~4.1 
up to the point where we have to use the discrete trace inequality 
 \begin{align*}
  \left\|u_h\right\|_{L^2(\partial\Omega^{(k))}}^2 \leq Ch^{-1}\left\|u_h\right\|_{L^2(\Omega^{(k))}}^2, \quad \forall u_h\in V_{h}^{(k)},
 \end{align*}
for IgA function spaces,
see, e.g., \cite{HL:EvansHughes:2013a}.
\qed \end{proof}
The subsequent statement immediately follows from \lemref{lem:wellPosedDg} and \lemref{lem:equDiscHarmonic}, see also \cite{HL:DryjaGalvisSarkis:2013a}.
%
%
\begin{corollary}
The spectral equivalence inequalities
  \begin{align}
  C_0 d^{(k)}(\mathcal{H}^{(k)}{u_{B_e}},\mathcal{H}^{(k)}{u_{B_e}})\leq a_e^{(k)}(\mathcal{H}_e^{(k)}{u_{B_e}},\mathcal{H}_e^{(k)}{u_{B_e}})\leq C_1 d^{(k)}(\mathcal{H}^{(k)}{u_{B_e}},\mathcal{H}^{(k)}{u_{B_e}}),
 \end{align}
hold for all $u_{B_e}^{(k)}\in W^{(k)}$,
where the constants $C_0$ and $C_1$ are independent of 
$\delta, h^{(k)}, H^{(k)}, \alpha^{(k)}$ and $u_{B_e}^{(k)}$.
\end{corollary}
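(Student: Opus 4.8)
The plan is to simply chain together the two estimates that the paper has just established. Recall that the energy norm in question is $d^{(k)}(\cdot,\cdot)$, that \lemref{lem:wellPosedDg} gives the spectral equivalence $\gamma_0 d^{(k)}(u_h,u_h)\le a_e^{(k)}(u_h,u_h)\le\gamma_1 d^{(k)}(u_h,u_h)$ for all $u_h\in V_{h,e}^{(k)}$ (the bilinear forms $a_e^{(k)}$, $d^{(k)}$ having been extended to the space $V_{h,e}^{(k)}$ on the extended patch $\Omega_e^{(k)}$ in exactly the way that makes this still hold), and that \lemref{lem:equDiscHarmonic} gives $d^{(k)}(\mathcal{H}^{(k)}{u_{B_e}},\mathcal{H}^{(k)}{u_{B_e}})\le d^{(k)}(\mathcal{H}_e^{(k)}{u_{B_e}},\mathcal{H}_e^{(k)}{u_{B_e}})\le C\, d^{(k)}(\mathcal{H}^{(k)}{u_{B_e}},\mathcal{H}^{(k)}{u_{B_e}})$.

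First I would apply \lemref{lem:wellPosedDg} to the particular function $u_h=\mathcal{H}_e^{(k)}{u_{B_e}}\in V_{h,e}^{(k)}$, obtaining
\[
\gamma_0\, d^{(k)}(\mathcal{H}_e^{(k)}{u_{B_e}},\mathcal{H}_e^{(k)}{u_{B_e}})
\le a_e^{(k)}(\mathcal{H}_e^{(k)}{u_{B_e}},\mathcal{H}_e^{(k)}{u_{B_e}})
\le \gamma_1\, d^{(k)}(\mathcal{H}_e^{(k)}{u_{B_e}},\mathcal{H}_e^{(k)}{u_{B_e}}).
\]
Then I would substitute the upper and lower bounds from \lemref{lem:equDiscHarmonic} into the two sides respectively: for the left inequality I bound $d^{(k)}(\mathcal{H}_e^{(k)}{u_{B_e}},\mathcal{H}_e^{(k)}{u_{B_e}})$ from below by $d^{(k)}(\mathcal{H}^{(k)}{u_{B_e}},\mathcal{H}^{(k)}{u_{B_e}})$, and for the right inequality I bound it from above by $C\, d^{(k)}(\mathcal{H}^{(k)}{u_{B_e}},\mathcal{H}^{(k)}{u_{B_e}})$. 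This yields the claim with $C_0=\gamma_0$ and $C_1=\gamma_1 C$. Since $\gamma_0,\gamma_1$ are independent of $\delta,h^{(k)},H^{(k)},\alpha^{(k)},u_h$ by \lemref{lem:wellPosedDg} and $C$ is independent of the same quantities by \lemref{lem:equDiscHarmonic}, so are $C_0$ and $C_1$, which is exactly the uniformity asserted.

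There is essentially no obstacle here — the corollary is a two-line consequence of chaining the preceding lemma and the preceding lemma's companion estimate. The only point that deserves a sentence of care is making sure that \lemref{lem:wellPosedDg}, as stated for $u_h\in V_{D,h}$, is indeed being used in the right ambient space: here it is applied patchwise on the extended space $V_{h,e}^{(k)}$, where the local forms $a_e^{(k)}$ and $d^{(k)}$ were redefined (in \secref{sec:DHE_Schur} and \secref{sec:dg_scheme}) precisely so that the single-patch equivalence $\gamma_0 d^{(k)}\le a_e^{(k)}\le\gamma_1 d^{(k)}$ continues to hold verbatim; this is the content of the first inequality of \equref{equ:equivPatchNormdG}. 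Once that identification is made, the algebra is immediate and the constants track through transparently.
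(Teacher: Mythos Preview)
Your argument is correct and matches the paper's own treatment: the corollary is stated to follow immediately from \lemref{lem:wellPosedDg} and \lemref{lem:equDiscHarmonic}, and your chaining of the two estimates (with $C_0=\gamma_0$, $C_1=\gamma_1 C$) is exactly the intended derivation. Your remark about applying \equref{equ:equivPatchNormdG} on the extended patch space $V_{h,e}^{(k)}$ is the only subtlety, and you have addressed it appropriately.
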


%
%

\subsection{Global space description}

Based on the definitions of the local spaces in \secref{sec:LocSpaces}, we can introduce the corresponding spaces 
\begin{align*}
 V_{h,e} := \{ v \,|\, v^{(k)}\in V_{h,e}^{(k)}, k\in\{1,\ldots,N\}\}.
\end{align*}
for the whole extended domain $\Omega_e$.
Additionally, we need a description of the global extended interface spaces
\begin{align*}
 W := \{ v_{B_e} \,|\, v_{B_e}^{(k)}\in W^{(k)}, k\in\{1,\ldots,N\}\} = \prod_{k=1}^N W^{(k)}.
\end{align*}
We note that according to \cite{HL:DryjaGalvisSarkis:2013a}, we will also interpret this space  as subspace of $V_{h,e}$, where its functions are discrete harmonic in the sense of $\mathcal{H}^{(k)}_e$ on each $\Omega^{(k)}$. For completeness, we define the discrete NURBS harmonic extension in the sense of $\sum_{k=1}^N a^{(k)}_e(\cdot,\cdot)$ and $\sum_{k=1}^N a^{(k)}(\cdot,\cdot)$ for $W$ as $\mathcal{H}_e u =\{\mathcal{H}_e^{(k)} u^{(k)}\}_{k=1}^N$ and $\mathcal{H}_e u =\{\mathcal{H}^{(k)} u^{(k)}\}_{k=1}^N$, respectively.

The goal is to reformulate \equref{equ:ModelDiscDG} and \equref{equ:Ku=f_DG} in terms of the extended domain $\Omega_e$. In order to achieve this, we need a coupling of the now independent interface dofs. In the context of tearing and interconnecting methods, we need a ``continuous'' subspace $\widehat{W}$ of $W$ such that $\widehat{W}$ is equivalent to $V_{\Gamma,h}$, i.e., $\widehat{W} \equiv V_{\Gamma,h}$. Since the space $V_{\Gamma,h}$ consists of functions which are discontinuous across the patch interface, the common understanding of continuity makes no sense. We follow the way in \cite{HL:DryjaGalvisSarkis:2007a}, providing an appropriate definition of continuity in the context of the spaces $\widehat{W}, W, V_{\Gamma,h}, V_{h,e}$ and $V_{h}$.
\begin{definition}
 We say that $u\in V_{h,e}$ is continuous on $\Gamma_e$ if 
the relations
 \begin{align}
 \label{def:continuous_1}
  (\boldsymbol{u}^{(k)})^{(k)}_i =  (\boldsymbol{u}^{(l)})^{(k)}_j \quad\forall (i,j)\in B_e(k,l), \;\forall l\in{\mathcal{I}}_{\mathcal{F}}^{(k)},
 \end{align}
and
 \begin{align}
 \label{def:continuous_2}
  (\boldsymbol{u}^{(k)})^{(l)}_i =  (\boldsymbol{u}^{(l)})^{(l)}_j \quad\forall (i,j)\in B_e(l,k), \;\forall l\in{\mathcal{I}}_{\mathcal{F}}^{(k)}.
 \end{align}
hold for all $k\in\{1,\ldots,N\}$.
 We denote 
the set of index pairs $(i,j)$ such that the i-th basis function in $V_{h}^{(k)}$ can be identified with the j-th basis function in $V_{h}^{(l)}(\overline{F}^{(kl)})$
by $B_e(k,l)$. 
We note that $B_e(k,l) \neq B_e(l,k)$.
 Moreover, 
$\widehat{V}_{h,e}$ 
denotes
the subspace of continuous functions on $\Gamma_e$ of $V_{h,e}$. Furthermore, $\widehat{V}_{h,e}$ can be identified with $V_{h}$.
\end{definition}
The operator $B : W \to U^*:=\mathbb{R}^{{\Lambda}}$, 
which realizes constraints \equref{def:continuous_1} and \equref{def:continuous_2}
in the form
  \begin{align*}
    Bu = 0,
  \end{align*}
is called \emph{jump operator}.
The space of all functions in $W$
which belong to the kernel of $B$ 
is denoted by $\widehat{W}$, and can be identified with $V_{\Gamma,h}$, i.e.
  \begin{align*}
   \widehat{W} = \{ w\in W|\, Bw =0 \} \equiv V_{\Gamma,h}.
  \end{align*}
  Furthermore, we define the restriction of $\widehat{W}$ to $\Omega^{(k)}_e$ by $\widehat{W}^{(k)}$.
\begin{remark}
 According to \cite{HL:DryjaGalvisSarkis:2013a}, the space $\widehat{W}$ can also be interpreted as the set of all functions in $\widehat{V}_{h,e}$ which are discrete harmonic in the sense of $\mathcal{H}_e$. 
\end{remark}
We introduce the set of patch vertices $\mathcal{V}$ and the corresponding extended set $\mathcal{V}_e$, given by
\begin{align*}
 \mathcal{V}_e^{(k)} = \mathcal{V}^{(k)}\cup \{\bigcup_{l\in{\mathcal{I}}_{\mathcal{F}}^{(k)}}\partial F^{(lk)} \}, \text{ where } \quad \mathcal{V}^{(k)} = \{\bigcup_{l\in{\mathcal{I}}_{\mathcal{F}}^{(k)}}\partial F^{(kl)}\},
 \end{align*}
 for 2D domains and by 
 \begin{align*}
  \mathcal{V}_e^{(k)} = \mathcal{V}^{(k)}\cup \{\bigcup_{(l,m)\in\mathcal{E}^{(k)}}\partial E^{(lkm)}\cup \partial E^{(mkl)} \}, \text{where } \quad\mathcal{V}^{(k)} = \{\bigcup_{(l,m)\in\mathcal{E}^{(k)}}\partial E^{(klm)}\}, 
\end{align*}
for 3D domains.
%
The set $\mathcal{V}$ and $\mathcal{V}_e$ is then given by the union of all $\mathcal{V}^{(k)}$ and $\mathcal{V}^{(k)}_e$, respectively. Moreover, we denote by $\mathcal{V}_e^{(kl)}\subset\mathcal{V}_e$ all vertices which belong to the interface $F^{(kl)}$.

Now we are in the position to reformulate \equref{equ:Ku=f_DG} in terms of $\widehat{V}_{h,e}$,
leading to the system
\begin{align}
\label{equ:hKu=hf}
 \widehat{\boldsymbol{K}}_e \boldsymbol{u}_e = \widehat{\boldsymbol{f}}_e,
\end{align}
where the matrix $\widehat{\boldsymbol{K}}_e$ is given by the assembly of the patchwise matrices $\boldsymbol{K}_e^{(k)}$, i.e.
\begin{align}
 \widehat{\boldsymbol{K}}_e = \sum_{k=1}^{N}\boldsymbol{A}_{\Omega^{(k)}_e}\boldsymbol{K}_e^{(k)}\boldsymbol{A}^T_{\Omega^{(k)}_e}\text{ and}\quad  \widehat{\boldsymbol{f}}_e=\sum_{k=1}^{N}\boldsymbol{A}_{\Omega^{(k)}_e}\boldsymbol{f}^{(k)}_e.
\end{align}
Here $\boldsymbol{A}_{\Omega^{(k)}_e}$ denotes the Boolean patch assembling matrix for $\Omega^{(k)}_e$. By means of the local Schur complements $\boldsymbol{S}_e^{(k)}$, see \equref{equ:loc_K} and \equref{equ:loc_Schur}, we can reformulate equation \equref{equ:hKu=hf} as
\begin{align}
 \label{equ:hSu=hg}
 \widehat{\boldsymbol{S}}_e u_{B_e} = \widehat{\boldsymbol{g}}_e,
\end{align}
where $\widehat{\boldsymbol{S}}_e$ and $\widehat{\boldsymbol{g}}_e$ are given by
  \begin{align}
  \label{equ:decomSchurMat}
    \widehat{\boldsymbol{S}}_e=\left(\sum_{k=1}^N \boldsymbol{A}_{\Gamma^{(k)}_e}\boldsymbol{S}^{(k)}_e\boldsymbol{A}^T_{\Gamma^{(k)}_e} \right) \text{ and}\quad 
    \widehat{\boldsymbol{g}}_e=\sum_{k=1}^N \boldsymbol{A}_{\Gamma^{(k)}_e}\boldsymbol{g}^{(k)}_e.
  \end{align}
  The Boolean matrix $\boldsymbol{A}_{\Gamma^{(k)}_e}$ is the corresponding assembling matrix and the vector $\boldsymbol{g}^{(k)}$ is defined by $\boldsymbol{g}^{(k)}= \boldsymbol{f}_{e,B_e}^{(k)} -  \boldsymbol{K}^{(k)}_{e,B_e I}\left(\boldsymbol{K}^{(k)}_{e,II}\right)^{-1}\boldsymbol{f}_{e,I}^{(k)}$. Furthermore, we can express \equref{equ:decomSchurMat} in operator notation as 
  \begin{align}
  \label{equ:decomSchurOp}
   \sum_{k=1}^N \langle S^{(k)}_e u_{B_e}^{(k)} , v^{(k)}\rangle = \sum_{k=1}^N \langle g_e^{(k)} , v^{(k)}\rangle \quad \forall v\in \widehat{W},
  \end{align}
 where $u_{B_e}\in \widehat{W},\, g_e^{(k)}\in\widehat{W}{^{(k)}}^*$ and $S^{(k)}_e: \widehat{W}^{(k)} \to \widehat{W}{^{(k)}}^*$. 
  
  In order to formulate the IETI-DP algorithm, we also define the Schur complement and the right-hand side functional on the ``discontinuous'' space $W$, i.e. 
     \begin{align*}
    S_e:W \to W^*, \quad\langle S_e v,w\rangle := \sum_{k=1}^N\langle S^{(k)}_e v^{(k)},w^{(k)}\rangle  \quad \forall v,w \in W,
    \end{align*}
    and
    \begin{align*}
    g_e\in W^*,\quad \langle g_e , w\rangle := \sum_{k=1}^N\langle g_e^{(k)},w^{(k)}\rangle  \quad \forall w \in W.	
   \end{align*}
In matrix form,
we can write $\boldsymbol{S}$ and $\boldsymbol{g}$ as
   \begin{align*}
       \boldsymbol{S}_e:= \text{diag}(\boldsymbol{S}^{(k)}_e)_{k=1}^N \text{ and} \quad \boldsymbol{g}_e:=[\boldsymbol{g}^{(k)}_e]_{k=1}^N.
   \end{align*}
   It is easy to see that problem \equref{equ:hSu=hg} is equivalent to
the minimization problem
    \begin{align}
      \label{equ:min_Schur}
    u_{B_e,h} = \underset{w\in W, Bw = 0}{\text{argmin}} \; \frac{1}{2} \langle S_e w , w\rangle -  \langle g_e,w\rangle.
   \end{align}
In the following, we will only work with the Schur complement system.
In order to simplify the notation, we will use $u$ instead of $u_{B,h}$, when we consider functions in $V_{\Gamma,h}$. 
If 
we have to made
a distinction between $u_h, u_{B,h}$ and $u_{I,h}$, 
we will add the subscripts again.

%

\subsection{Intermediate space and primal constraints}

   The key point of the dual-primal approach is the definition of an intermediate space $\widetilde{W}$ in the sense $\widehat{W}\subset\widetilde{W}\subset W$ such that $S_e$ restricted to $\widetilde{W}$ is positive definite.   Let $\Psi \subset V_{\Gamma,h}^*$ be a set of linearly independent \emph{primal variables}.
Then we define the spaces
\begin{equation*}
\widetilde{W} := \{w\in W: \forall\psi \in \Psi: \psi(w^{(k)}) = \psi(w^{(l)}), \forall k>l  \} 
\end{equation*}
and
\begin{equation*}
W_{\Delta} := \prod_{k=1}^N W_{\Delta}^{(k)},\text{ with} \quad W_{\Delta}^{(k)}:=\{w^{(k)}\in W^{(k)}: \forall\psi \in \Psi: \psi(w^{(k)}) =0 \}.
\end{equation*}
%
%
Moreover, we introduce the space $W_{\Pi} \subset \widehat{W}$ such that
$\widetilde{W} = W_{\Pi} \oplus W_{\Delta}.$
%
We call $W_{\Pi}$ \emph{primal space} and $W_{\Delta}$ \emph{dual space}. 
If we choose $\Psi$ such that $\widetilde{W} \cap \ker{S_e}=\{0\}$, then
  \begin{align*}
   \widetilde{S}_e: \widetilde{W} \to \widetilde{W}^*, \, \text{ with } \langle \widetilde{S}_e v,w\rangle =  \langle S_e v,w\rangle \quad \forall v,w \in \widetilde{W},
  \end{align*}
  is invertible. If a set $\Psi$ fulfils $\widetilde{W} \cap \ker{S_e} =\{0\}$, then we say that the set $\Psi$ \emph{controls the kernel}.
In the following, we will always assume that such a set is chosen. 
We will work with the following typical choices for the primal variables $\psi$:
\begin{itemize}
    \item Vertex evaluation: $\psi^\mathcal{V}(v) = v(\mathcal{V})$,
    \item Edge averages: $\psi^\mathcal{E}(v) = \frac{1}{|\mathcal{E}|}\int_{\mathcal{E}}v\,ds$,
    \item Face averages: $\psi^\mathcal{F}(v) = \frac{1}{|\mathcal{F}|}\int_{\mathcal{F}}v\,ds$.
\end{itemize}
Since we are considering a non-conforming test space $\widehat{W}$ and $V_{h}$,
we cannot literally use  the same set of primal variables as presented in \cite{HL:ToselliWidlund:2005a}, \cite{HL:Pechstein:2013a}, or \cite{HL:HoferLanger:2015a}. As proposed in \cite{HL:DryjaGalvisSarkis:2013a} and \cite{HL:DryjaSarkis:2014a}, we will use the following interpretation of continuity at corners, and continuous edge and face averages.
\begin{definition}
\label{def:primalVariables}
 Let $\mathcal{V}^{(k)}_e$, $\mathcal{E}^{(k)}$ and $\mathcal{F}^{(k)}$ be the set of vertices, edges and faces, respectively, for the patch $\Omega^{(k)}_e$, where in 2D the set $\mathcal{E}^{(k)}$ is empty. 
 
 We say that $u\in W$ is continuous at $\mathcal{V}^{(k)}$, $k\in\{1,\ldots,N\}$, if  
the relations
 \begin{align}
  (\boldsymbol{u}^{(k)})^{(k)}_i =  (\boldsymbol{u}^{(l)})^{(k)}_j \quad\forall (i,j)\in B_\mathcal{V}(k,l) 
 \end{align}
are valid for  all $l\in{\mathcal{I}}_{\mathcal{F}}^{(k)}$,
 where $B_\mathcal{V}(k,l)\subset B(k,l)$ is given by all index pairs corresponding to the vertices $\mathcal{V}_e^{(k)}$.
%
We define the corresponding primal variable as
\begin{align}
 \psi^{\nu^{(kl)}}
 (v):= \begin{cases}
                             (\boldsymbol{v}^{(k)})^{(k)}_i &\text{if }\;  v\in W^{(k)},\\
                             (\boldsymbol{v}^{(l)})^{(k)}_j &\text{if }\;  v\in W^{(l)},\\
                             0 & \text{else},
                            \end{cases}
\end{align}
where $l\in{\mathcal{I}}_{\mathcal{F}}^{(k)}$, $\nu^{(kl)}\in\mathcal{V}_e^{(kl)}$ and $(i,j)\in B_\mathcal{V}(k,l)$ corresponds to $\nu^{(kl)}$.
 
 We  say that $u\in W$ has continuous (inter-)face averages at $\mathcal{F}^{(k)}$, $k\in\{1,\ldots,N\}$, if  
the relations
\begin{align}
  \frac{1}{|F^{(kl)}|}\int_{F^{(kl)}}(u^{(k)})^{(k)}\,ds =  \frac{1}{|F^{(kl)}|}\int_{F^{(kl)}}(u^{(l)})^{(k)}\,ds    
\end{align}
hold for all $l\in{\mathcal{I}}_{\mathcal{F}}^{(k)}$.
 We define the corresponding primal variable as
\begin{align}
 \psi^{F^{(kl)}}(v):= \begin{cases}
                             \frac{1}{|F^{(kl)}|}\int_{F^{(kl)}}(u^{(k)})^{(k)}\,ds &\text{if }\;  v\in W^{(k)},\\
                             \frac{1}{|F^{(kl)}|}\int_{F^{(kl)}}(u^{(l)})^{(k)}\,ds &\text{if }\;  v\in W^{(l)},\\
                             0 & \text{else},
                            \end{cases}
\end{align}
where $l\in{\mathcal{I}}_{\mathcal{F}}^{(k)}$.

 We say that $u\in W$ has continuous edge averages at $\mathcal{E}^{(k)}$, $k\in\{1,\ldots,N\}$, if  
the relations
  \begin{align}
  \frac{1}{|E^{(klm)}|}\int_{E^{(klm)}}(u^{(k)})^{(k)}\,ds =  \frac{1}{|E^{(klm)}|}\int_{E^{(klm)}}(u^{(l)})^{(k)}\,ds,\\  
  \frac{1}{|E^{(klm)}|}\int_{E^{(klm)}}(u^{(k)})^{(k)}\,ds =  \frac{1}{|E^{(klm)}|}\int_{E^{(klm)}}(u^{(m)})^{(k)}\,ds   
 \end{align}
hold for all $(l,m)\in\mathcal{E}^{(k)}$.
 We define the corresponding primal variable as
\begin{align}
 \psi^{E^{(klm)}}(v):= \begin{cases}
                             \frac{1}{|E^{(klm)}|}\int_{E^{(klm)}}(u^{(k)})^{(k)}\,ds &\text{if }\;  v\in W^{(k)},\\
                             \frac{1}{|E^{(klm)}|}\int_{E^{(klm)}}(u^{(l)})^{(k)}\,ds &\text{if }\;  v\in W^{(l)},\\
                             \frac{1}{|E^{(klm)}|}\int_{E^{(klm)}}(u^{(m)})^{(k)}\,ds &\text{if }\;  v\in W^{(m)},\\
                             0 & \text{else},
                            \end{cases}
\end{align}
where $(l,m)\in\mathcal{E}^{(k)}$.
\end{definition}
By means of \defref{def:primalVariables}, we can now introduce different sets of primal variables $\Psi$: 
\begin{itemize}
    \item Algorithm A: $\Psi^A := \{\psi^\nu,\; \forall \nu\in\mathcal{V}^{(k)}\}_{k=1}^N$,
    \item Algorithm B: $\Psi^B := \{\psi^\nu,\; \forall \nu\in\mathcal{V}^{(k)}\}_{k=1}^N \cup \{\psi^E,\; \forall E\in\mathcal{E}^{(k)}\}_{k=1}^N \cup \{\psi^F,\; \forall F\in\mathcal{E}^{(k)}\}_{k=1}^N$,
    \item Algorithm C: $\Psi^C := \{\psi^\nu,\; \forall \nu\in\mathcal{V}^{(k)}\}_{k=1}^N \cup \{\psi^E,\; \forall E\in\mathcal{E}^{(k)}\}_{k=1}^N $.
\end{itemize}

%
%

\subsection{IETI - DP and preconditioning}
\label{sec:IETI_DP}
Since $\widetilde{W} \subset W$, there is a natural embedding $\widetilde{I}: \widetilde{W} \to W$. 
Let the jump operator restricted to $\widetilde{W}$ be
\begin{align}
   \label{def:Btilde}
    \widetilde{B} := B\widetilde{I} : \widetilde{W} \to U^*.
\end{align}
Then we can formulate problem \equref{equ:min_Schur} as saddle point problem in $\widetilde{W}$ as follows:
Find $(u,\boldsymbol{\lambda}) \in \widetilde{W} \times U:$
    \begin{align}
    \label{equ:saddlePointReg}
     \MatTwo{\widetilde{S}_e}{\widetilde{B}^T}{\widetilde{B}}{0} \VecTwo{u}{\boldsymbol{\lambda}} = \VecTwo{\widetilde{g}}{0},
    \end{align}
    where $\widetilde{g} := \widetilde{I}^T g$, and $\widetilde{B}^T= \widetilde{I}^T B^T$. Here, $\widetilde{I}^T: W^* \to \widetilde{W}^*$ denotes the adjoint of $\widetilde{I}$, which can be seen as a partial assembling operator.

By construction, $\widetilde{S}_e$ is SPD on $\widetilde{W}$. Hence, we can define the Schur complement $F$ 
and the corresponding right-hand side of equation \equref{equ:saddlePointReg} 
as follows:
\begin{align*}
    F:= \widetilde{B} \widetilde{S}_e^{-1}\widetilde{B}^T, \quad d:= \widetilde{B}\widetilde{S}_e^{-1} \widetilde{g}.
\end{align*}
Hence, the saddle point system \equref{equ:saddlePointReg} is equivalent to 
the Schur complement problem:
\begin{align}
   \label{equ:SchurFinal}
      \text{Find } \boldsymbol{\lambda} \in U: \quad F\boldsymbol{\lambda} = d.
\end{align}
Equation \equref{equ:SchurFinal} is solved by means of the PCG algorithm, but it requires an appropriate preconditioner in order to obtain an efficient solver. According to \cite{HL:DryjaGalvisSarkis:2013a} and \cite{HL:DryjaSarkis:2014a}, the right choice for FE is the \emph{scaled Dirichlet preconditioner}, adapted for the extended set of dofs. 
The numerical tests presented in Section~\ref{sec:numerical} 
indicate
that the scaled Dirichlet preconditioner works well  for the IgA setting too.

Recall the definition of $S_e = \text{diag}(S_e^{(k)})_{k=1}^N$, we define the scaled Dirichlet preconditioner $M_{sD}^{-1}$ as
   \begin{align}
   \label{equ:scaled_Dirichlet}
	M_{sD}^{-1} = B_D S_e B_D^T,
   \end{align}
where $B_D$ is a scaled version of the jump operator $B$. 
The scaled jump operator
$B_D$ is defined such that the operator enforces the constraints
 \begin{align}
 \label{equ:scaledB1}
  {\delta^\dagger}^{(l)}_j(\boldsymbol{u}^{(k)})^{(k)}_i -  {\delta^\dagger}^{(k)}_i(\boldsymbol{u}^{(l)})^{(k)}_j = 0 \quad\forall (i,j)\in B_e(k,l), \;\forall l\in{\mathcal{I}}_{\mathcal{F}}^{(k)},
 \end{align}
and
 \begin{align}
 \label{equ:scaledB2}
  {\delta^\dagger}^{(l)}_j(\boldsymbol{u}^{(k)})^{(l)}_i -  {\delta^\dagger}^{(k)}_i(\boldsymbol{u}^{(l)})^{(l)}_j = 0 \quad\forall (i,j)\in B_e(l,k), \;\forall l\in{\mathcal{I}}_{\mathcal{F}}^{(k)},
 \end{align}
 where for $(i,j)\in B_e(k,l)$
 \begin{align*}
        {\delta^\dagger}^{(k)}_i= \frac{\rho^{(k)}_i}{\sum_{l\in{\mathcal{I}}_{\mathcal{F}}^{(k)}} \rho^{(l)}_j} 
       \end{align*}
 is an appropriate scaling. Typical choices for $\rho^{(k)}_i$ are
       \begin{itemize}
     \item Multiplicity Scaling: $\rho^{(k)}_i = 1$,
     \item Coefficient Scaling: If $\alpha(x)_{|\Omega^{(k)}} = \alpha^{(k)}$, choose $\rho^{(k)}_i = \alpha^{(k)}$, 
     \item Stiffness Scaling: $\rho^{(k)}_i = {\boldsymbol{K}_e}_{i,i}^{(k)}$. 
    \end{itemize}
  If the diffusion coefficient $\alpha$ is constant and identical on each patch, then the multiplicity and the coefficient scaling are the same. If there is only a little variation in $\alpha$, then the multiplicity scaling provides good results. 
If the variation is 
really
large, then one should use the other scalings to obtain robustness 
with respect to the jumps in the diffusion coefficient across 
the patch interfaces.

  
 Since we can consider the dG-IETI-DP method as a conforming Galerkin (cG) method on an extended grid,
 we can implement the dG-IETI-DP algorithm following the implementation of the corresponding 
 cG-IETI-DP method given in \cite{HL:HoferLanger:2015a}.
 For completeness, we give an outline of the algorithm. The Schur complement system \equref{equ:SchurFinal} is solved using a CG algorithm with the preconditioner given in \equref{equ:scaled_Dirichlet}. The application of $F$ and $M_{sD}^{-1}$ is outlined in Algorithm~\ref{alg:applyF} and Algorithm~\ref{alg:applyMsD}. 
     \begin{algorithm}
   \caption{Algorithm for the calculation of $\boldsymbol{\nu} = F\boldsymbol{\lambda}$ for given $\boldsymbol{\lambda} \in U$}   
   \label{alg:applyF}
   \begin{algorithmic}
    \State Application of $B^T:$ $\{f^{(k)}\}_{k=1}^N = B^T\boldsymbol{\lambda}$
     \State Application of $\widetilde{I}^T:$ $\{\boldsymbol{f}_{\Pi},\{f_{\Delta}^{(k)}\}_{k=1}^N\} = \widetilde{I}^T\left(\{f^{(k)}\}_{k=1}^N\right)$
    \State  Application of $\widetilde{S}^{-1}_e:$
	  \begin{itemize}
	   \item  $\boldsymbol{w}_{\Pi} = \boldsymbol S_{e,\Pi\Pi}^{-1} \boldsymbol{f}_{\Pi}$
	   \item  $w_{\Delta}^{(k)} = {S_{e,\Delta\Delta}^{(k)}}^{-1}f_{\Delta}^{(k)} \quad \forall k=1,\ldots,N$ 
	  \end{itemize}
   \State  Application of $\widetilde{I}:$ $\{w^{(k)}\}_{k=1}^N = \widetilde{I}\left(\{\boldsymbol{w}_{\Pi},\{w_{\Delta}^{(k)}\}_{k=1}^N\} \right)$
   \State  Application of $B:$ $ \boldsymbol{\nu} = B\left( \{w^{(k)}\}_{k=1}^N \right)$
  \end{algorithmic}
  \end{algorithm}
       \begin{algorithm}
   \caption{Algorithm for the calculation of $\boldsymbol{\nu} = M_{sD}^{-1}\boldsymbol{\lambda}$ for given $\boldsymbol{\lambda} \in U$}
   \label{alg:applyMsD}
   \begin{algorithmic}
    \State Application of $B_D^T:$ $\{w^{(k)}\}_{k=1}^N = B_D^T\boldsymbol{\lambda}$
    \State  Application of $S_e:$
	  \begin{enumerate}
     \item Solve: $K^{(k)}_{e,II} x^{(k)} = -K^{(k)}_{e,IB}w^{(k)}$
     \item $v^{(k)} = K^{(k)}_{e,BB} w^{(k)} + K^{(k)}_{e,BI}x^{(k)}$.
	  \end{enumerate}
   \State  Application of $B_D:$ $ \boldsymbol{\nu} = B_D\left( \{v^{(k)}\}_{k=1}^N \right)$
  \end{algorithmic}
  \end{algorithm}

  In \cite{HL:DryjaGalvisSarkis:2013a} and \cite{HL:DryjaSarkis:2014a} it is proven for FE that the condition number behaves 
like the condition number of the preconditioned system
for the continuous FETI-DP method, see also \cite{HL:DryjaGalvisSarkis:2007a} for dG-BDDC FE preconditioners. From \cite{HL:HoferLanger:2015a} and \cite{HL:VeigaChoPavarinoScacchi:2013a}, we know that the condition number of the continuous IETI-DP and BDDC-IgA operators 
is also quasi-optimal with respect to the patch and mesh sizes. 
Therefore, we expect that the condition number of the   dG-IETI-DP operator behaves as
\begin{align*}
       \kappa(M_ {sD}^{-1} F_{\widetilde{U}}) \leq C \max_k\left(1+\log\left(\frac{H^{(k)}}{h^{(k)}}\right)\right)^2,
\end{align*}
where $H^{(k)}$ and $h^{(k)}$ are the patch size and mesh size, respectively, 
and the positive  constant $C$ is independent of 
$H^{(k)}$, $h^{(k)}$,
$h^{(k)}/h^{(l)}$,
and $\alpha$.
Our numerical results presented in the next section insistently confirm 
this behaviour.
  

\section{Numerical examples}
  \label{sec:numerical}
  
%
In this section, we present some numerical results documenting 
the numerical behaviour of the implemented dG-IETI-DP algorithm for solving large-scale linear systems arising from higher-order IgA discretizations
of \equref{equ:ModelStrong} in the domains illustrated in \figref{fig:YETI_footprint}(a) and \figref{fig:YETI_footprint}(b). 
The computational domain consists of 21 subdomains in both 2D and 3D. In both cases, one side of a patch boundary has inhomogeneous Dirichlet conditions, whereas all other sides have homogeneous Neumann conditions. 
We consider the case of non-matching meshes,
i.e. two neighbouring patches may have different mesh sizes $h^{(k)}$ and $h^{(l)}$. Due to our implementation of the dG formulation, we only consider nested meshes on the interface, i.e. the B-Spline spaces on the interfaces are nested. However, we note that the presented algorithm does not rely on this assumption.  Each subdomain has a diameter of $H^{(k)}$ and an associated mesh size of $h^{(k)}$. In the following, we use the abbreviation $H/h=\max_k H^{(k)}/h^{(k)}$. We consider B-Splines, 
where its degree is chosen as $p = 2$ and $p = 4$. In all numerical examples when increasing the degree from $2$ to $4$, we keep the smoothness of the space, i.e. increasing the multiplicity of the knots on the coarsest mesh. In order to solve the linear system \equref{equ:SchurFinal}, a PCG algorithm with the scaled Dirichlet preconditioner \equref{equ:scaled_Dirichlet} is performed. We use a zero initial guess, and  a reduction of the initial residual by a factor of $10^{-6}$ as stopping criterion. The numerical examples illustrate the dependence of the condition number of the IETI-DP preconditioned system on jumps in the diffusion coefficient $\alpha$, patch size $H$, mesh size $h$ and the degree $p$. In \secref{sec:dependence_hkhl}, we investigate the special case of increasing $h^{(k)}/h^{(l)}$ and its influence on the condition number. In all other tests we consider a fixed the ratio $h^{(k)}/h^{(l)}$.
  
  We use the C++ library G+Smo\footnote{\url{https://ricamsvn.ricam.oeaw.ac.at/trac/gismo/wiki/WikiStart}} for describing the geometry and performing the numerical tests,  see also \cite{HL:JuettlerLangerMantzaflarisMooreZulehner:2014a} and \cite{gismoweb}.
The boldface letters in Tables~\ref{table:hom3D}, \ref{table:jump3D} and \ref{table:hihj} mean that the problem size does not fit into our
Desktop PC with an Intel(R) Xeon(R) CPU E5-1650 v2 @ 3.50GHz and 16 GB main memory,
on which we performed the numerical experiments. The problems with boldface letters were calculated on a server with 8x Intel(R) Xeon(R) CPU E7-4870 2,4GHz and 1 TB main memory.
 \begin{figure}[h!]
  \begin{subfigmatrix}{3}
\subfigure[2D YETI-footprint]{\includegraphics[width=0.3\textwidth]{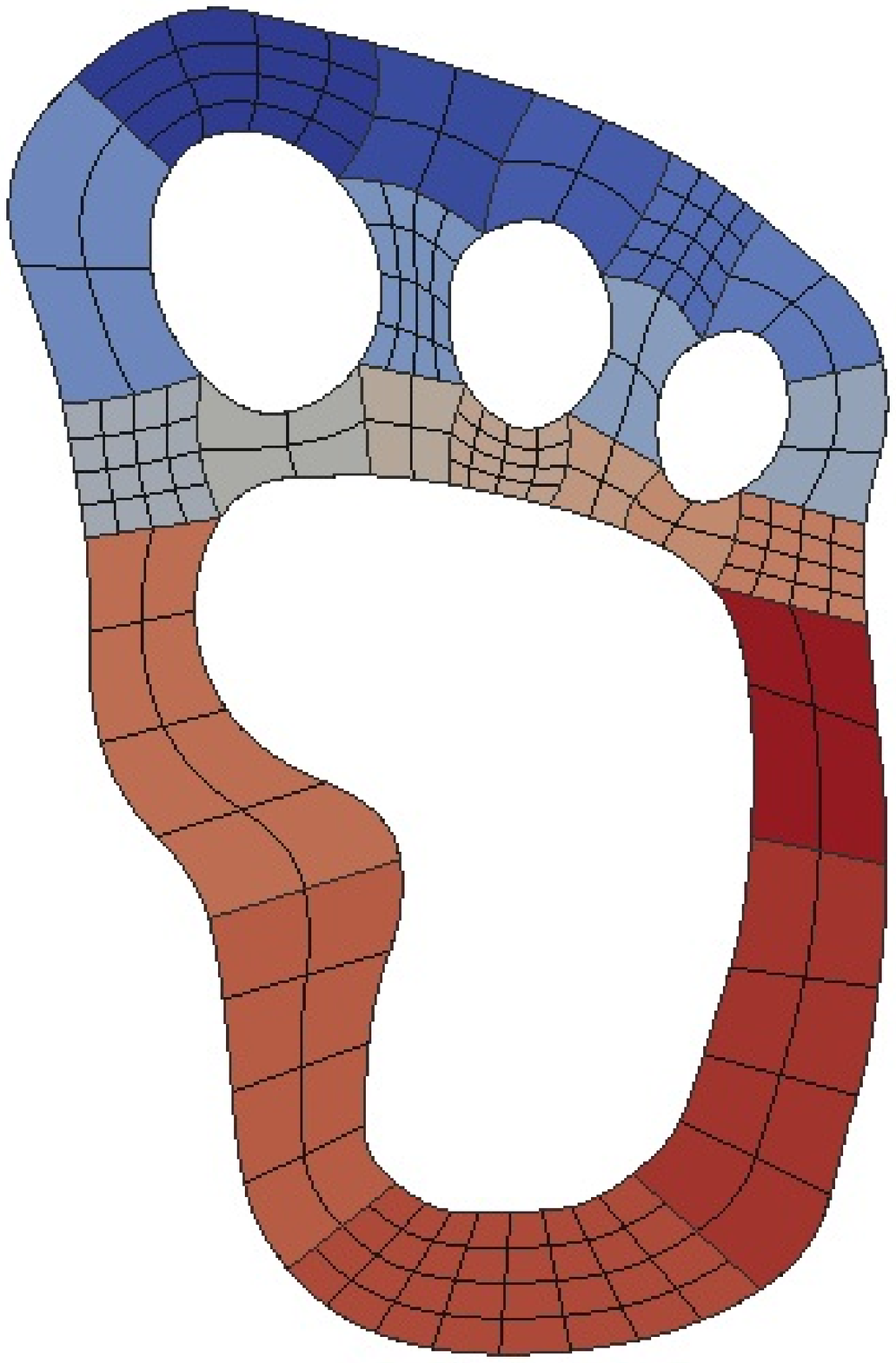} }
\subfigure[3D YETI-footprint]{\includegraphics[width=0.3\textwidth]{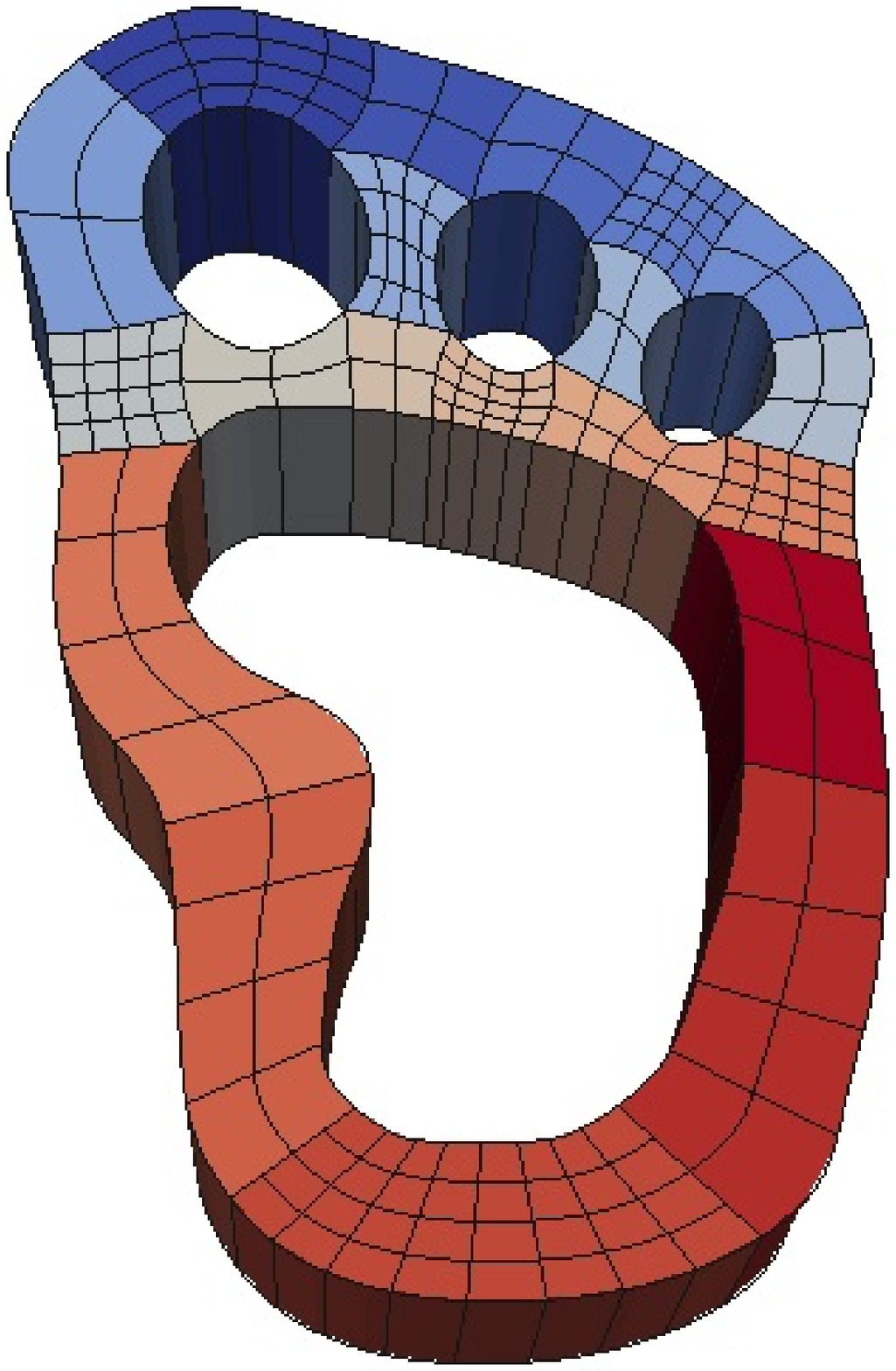} }
\subfigure[Pattern of jumping coefficients]{\includegraphics[width=0.3\textwidth]{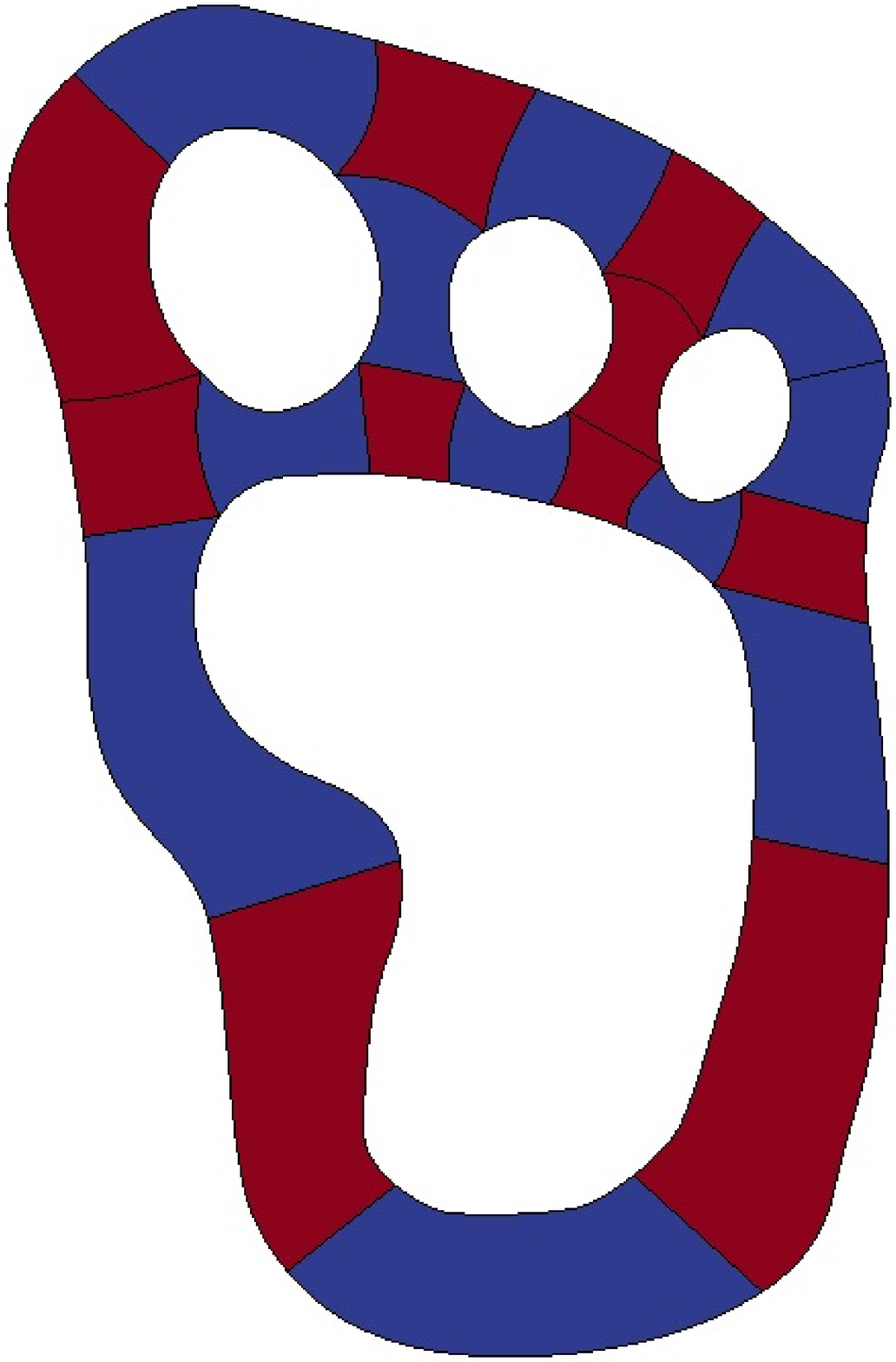} }
 \end{subfigmatrix}
  \caption{Figure (a) and (b) show 
  the computational domain, the decomposition into patches and its initial mesh in 2D and 3D, respectively. Figure (c) presents the pattern of the jumping diffusion coefficient. }
  \label{fig:YETI_footprint}
  \end{figure}
 \subsection{The case of homogeneous diffusion coefficient}
 \label{sec:hom_coeff}
 We first consider the case of a homogeneous diffusion coefficient, i.e. $\alpha=1$ on $\Omega$. The 2D results are summarized in \tabref{table:hom2D}, whereas the 3D results are presented in  \tabref{table:hom3D}. We observe, that the condition number of the preconditioned system grows logarithmically with respect to $H/h$. Moreover, the numerical results indicate a dependence of the condition number on the degree $p$, which will be investigated in more detail in \secref{sec:dependence_p}.
 
 \begin{table}[h!]
\centering
\begin{tabular}{|r|c|cc|cc||r|c|cc|cc|}\hline
\multicolumn{6}{|c||}{Degree $p=2$} 					             & \multicolumn{6}{|c|}{Degree $p=4$} \\\hline
 \multicolumn{2}{|c|}{ALG. A} &  \multicolumn{2}{|c|}{coeff. scal.} & \multicolumn{2}{|c||}{stiff. scal.}& \multicolumn{2}{|c|}{ALG. A} & \multicolumn{2}{|c|}{coeff. scal.} & \multicolumn{2}{|c|}{stiff. scal.}  \\ \hline
	    $\#$dofs & $H/h$ & $\kappa$ & It. & $\kappa$ & It. &$\#$dofs & $H/h$ & $\kappa$ & It. & $\kappa$ & It. \\ \hline
	      1610   & 8    &   3.07  &  17   &   3.16   &     17     &   4706  &   8   &   4.56  &  20  &  4.75  &   20   \\ \hline     
	      4706   & 16   &   4.06  &  19   &   4.22   &     19     &   9370  &  16   &   5.64  &  22  &  5.89  &   22    \\ \hline    
	      15602  & 32   &   5.22  &  21   &   5.45   &     21     &   23402 &  32   &   7.01  &  23  &  7.35  &   23    \\ \hline    
	      56210  & 64   &   6.55  &  23   &   6.86   &     23     &   70282 &  64   &   8.56  &  24  &  9.00  &   24    \\ \hline  
	      212690 & 128  &   8.04  &  24   &   8.45   &     24     &   239306&  128  &   10.3  &  26  &  10.8  &   26      \\  
	      \hline\hline
	  \multicolumn{2}{|c|}{ALG. C}  & \multicolumn{4}{|c||}{ } & \multicolumn{2}{|c|}{ALG. C}  & \multicolumn{4}{|c|}{ }\\ \hline
	      1610   & 8    &   1.35  &   9   &   1.34   &      9     &   4706  &   8   &   1.78  &  11  &  1.79  &   12   \\ \hline     
	      4706   & 16   &   1.64  &  11   &   1.64   &     11     &   9370  &  16   &   2.11  &  13  &  2.11  &   13    \\ \hline    
	      15602  & 32   &   1.99  &  12   &   1.99   &     12     &   23402 &  32   &   2.54  &  15  &  2.53  &   15    \\ \hline    
	      56210  & 64   &   2.41  &  14   &   2.41   &     14     &   70282 &  64   &   3.03  &  16  &  3.00  &   16    \\ \hline  
	      212690 & 128  &   2.88  &  16   &   2.88   &     16     &   239306&  128  &   3.57  &  17  &  3.54  &   18      \\ \hline
  \end{tabular}
  \caption{2D example with $p = 2$ (left) and $p = 4$ (right), and homogeneous diffusion coefficient. Dependence of the condition number $\kappa$ and the number It. of iterations on $H/h$ for the preconditioned system with coefficient and stiffness scaling. 
  Choice of primal variables: vertex evaluation (upper table), vertex evaluation and edge averages (lower table).}
\label{table:hom2D}
  \end{table}
\begin{table}[h!]
\centering
\begin{tabular}{|r|c|cc|cc||r|c|cc|cc|}\hline
\multicolumn{6}{|c||}{Degree $p=2$} 					             & \multicolumn{6}{|c|}{Degree $p=4$} \\ \hline
 \multicolumn{2}{|c|}{ALG. A} &  \multicolumn{2}{|c|}{coeff. scal.} & \multicolumn{2}{|c||}{stiff. scal.}& \multicolumn{2}{|c|}{ALG. A} & \multicolumn{2}{|c|}{coeff. scal.} & \multicolumn{2}{|c|}{stiff. scal.}  \\ \hline
	    $\#$dofs & $H/h$ & $\kappa$ & It. & $\kappa$ & It. &$\#$dofs & $H/h$ & $\kappa$ & It. & $\kappa$ & It. \\ \hline
	      2800   & 3    &   49.9  &  44   &   49.3   &     44     &   22204  &   3   &   203  &  77  &  199  &   79   \\ \hline     
	      9478   & 6    &   72.3  &  48   &   70.3   &     49     &   42922  &   6   &   248   &  82  &  240   &   83    \\ \hline    
	      42922  & 12   &   169   &  70   &   165    &     69     &   116110 &   12  &   506  &  104  &  488  &   104    \\ \hline    
	      244594 & 25   &   376   &  91   &   368    &     92     &  \bf 443926  & \bf 25   & \bf 1090  & \bf 130  & \bf 1060 & \bf 129  \\ 	      
	      \hline\hline
	  \multicolumn{2}{|c|}{ALG. B}  & \multicolumn{4}{|c||}{ } & \multicolumn{2}{|c|}{ALG. B}  & \multicolumn{4}{|c|}{ }\\ \hline
	      2800   & 3    &   1.49  &  9    &   1.45   &     9      &   22204  &   3   &   23.3  &  33  &  21.5  &   34   \\ \hline     
	      9478   & 6    &   15.8  &  17   &   14.7   &     17     &   42922  &   6   &   26.7  &  34  &  24.7   &   34    \\ \hline    
	      42922  & 12   &   19.8  &  30   &   18.4   &     29     &   116110 &   12  &   31.3  &  42  &  29.1  &   41    \\ \hline    
	      244594 & 25   &   24   &  37   &   22.4    &     36     &   \bf 443926 &  \bf 25 & \bf 36.4 & \bf44  & \bf34.1 & \bf45   \\ \hline  
  \end{tabular}
  \caption{3D example with $p = 2$ (left) and $p = 4$ (right), and homogeneous diffusion coefficient. Dependence of the condition number $\kappa$ and the number It. of iterations on $H/h$ for the preconditioned system with coefficient and stiffness scaling. Choice of primal variables: vertex evaluation (upper table), vertex evaluation and edge averages and face averages (lower table).}
\label{table:hom3D}
  \end{table}
  
 \subsection{The case of inhomogeneous diffusion coefficient}
 \label{sec:jumping_coeff}
 In this subsection, we investigate the case of patchwise constant diffusion coefficient, but with jumps across the patch interfaces. The diffusion coefficient takes values $\alpha^{(k)}\in \{10^{-4}, 10^4\}$, with a jumping pattern according to \figref{fig:YETI_footprint} (c). The 2D results are summarized in \tabref{table:jump2D}, and the 3D results are presented in  \tabref{table:jump3D}. First of all, one clearly sees the robustness with respect to jumping coefficients of the considered method and the quasi optimal dependence of the condition number on $H/h$. The dependence of the degree will again be studied in \secref{sec:dependence_p}.
   %
  \begin{table}[h!]
\centering
\begin{tabular}{|r|c|cc|cc||r|c|cc|cc|}\hline
\multicolumn{6}{|c||}{Degree $p=2$} 					             & \multicolumn{6}{|c|}{Degree $p=4$} \\\hline
 \multicolumn{2}{|c|}{ALG. A} &  \multicolumn{2}{|c|}{coeff. scal.} & \multicolumn{2}{|c||}{stiff. scal.}& \multicolumn{2}{|c|}{ALG. A} & \multicolumn{2}{|c|}{coeff. scal.} & \multicolumn{2}{|c|}{stiff. scal.}  \\ \hline
	    $\#$dofs & $H/h$ & $\kappa$ & It. & $\kappa$ & It. &$\#$dofs & $H/h$ & $\kappa$ & It. & $\kappa$ & It. \\ \hline
	      1610   & 8    &   3.82  &  12   &   4.02   &     12     &   4706  &   8   &   5.72  &  14  &  6.16  &   14   \\ \hline     
	      4706   & 16   &   5.11  &  13   &   5.47   &     13     &   9370  &  16   &   7.08  &  14  &  7.7   &   15    \\ \hline    
	      15602  & 32   &   6.58  &  15   &   7.12   &     15     &   23402 &  32   &   8.77  &  15  &  9.64  &   17    \\ \hline    
	      56210  & 64   &   8.23  &  15   &   9      &     16     &   70282 &  64   &   10.7  &  18  &  11.8  &   18    \\ \hline  
	      212690 & 128  &   10.1  &  17   &   11.1   &     18     &   239306&  128  &   12.8  &  18  &  14.2  &   18      \\  
	      \hline\hline
	  \multicolumn{2}{|c|}{ALG. C}  & \multicolumn{4}{|c||}{ } & \multicolumn{2}{|c|}{ALG. C}  & \multicolumn{4}{|c|}{ }\\ \hline
	      1610   & 8    &   1.4  &   7   &   1.43   &      7     &   4706  &   8   &   1.85  &  8  &  1.94  &   8   \\ \hline     
	      4706   & 16   &   1.7  &   7   &   1.78   &      7     &   9370  &  16   &   2.17  &  8  &  2.32  &   8    \\ \hline    
	      15602  & 32   &   2.06 &   8   &   2.19   &      8     &   23402 &  32   &   2.58  &  9  &  2.81  &   9    \\ \hline    
	      56210  & 64   &   2.46 &   8   &   2.65   &      8     &   70282 &  64   &   3.05  &  9  &  3.36  &   9   \\ \hline  
	      212690 & 128  &   2.9  &   9   &   3.18   &      9     &   239306&  128  &   3.55  &  10 &  3.97  &   10     \\ \hline
  \end{tabular}
  \caption{2D example with $p = 2$ (left) and $p = 4$ (right), and 
jumping diffusion coefficient. Dependence of the condition number $\kappa$ and the number It. of iterations on $H/h$ for the preconditioned system with coefficient and stiffness scaling. 
  Choice of primal variables: vertex evaluation (upper table), vertex evaluation and edge averages (lower table).}
\label{table:jump2D}
  \end{table}
\begin{table}[h!]
\centering
\begin{tabular}{|r|c|cc|cc||r|c|cc|cc|}\hline
\multicolumn{6}{|c||}{Degree $p=2$} 					             & \multicolumn{6}{|c|}{Degree $p=4$} \\ \hline
 \multicolumn{2}{|c|}{ALG. A} &  \multicolumn{2}{|c|}{coeff. scal.} & \multicolumn{2}{|c||}{stiff. scal.}& \multicolumn{2}{|c|}{ALG. A} & \multicolumn{2}{|c|}{coeff. scal.} & \multicolumn{2}{|c|}{stiff. scal.}  \\ \hline
	    $\#$dofs & $H/h$ & $\kappa$ & It. & $\kappa$ & It. &$\#$dofs & $H/h$ & $\kappa$ & It. & $\kappa$ & It. \\ \hline
	      2800   & 3    &   50.3  &  28   &   57.9   &     25     &   22204  &   3   &   203  &  44  &  236  &   45   \\ \hline     
	      9478   & 6    &   72.2  &  29   &   83.4   &     29     &   42922  &   6   &   250  &  43  &  290  &   42    \\ \hline    
	      42922  & 12   &   176   &  43   &   203    &     42     &   116110 &   12  &   520  &  58  &  605  &   57    \\ \hline    
	      244594 & 25   &   400   &  52   &   463    &     58     &  \bf 443926 &  \bf 25   & \bf 1143 &  \bf 72 & \bf 1331  &  \bf 80   \\ 	      
	      \hline\hline
	  \multicolumn{2}{|c|}{ALG. B}  & \multicolumn{4}{|c||}{ } & \multicolumn{2}{|c|}{ALG. B}  & \multicolumn{4}{|c|}{ }\\ \hline
	      2800   & 3    &   2.11  &  11   &   2.17   &     11     &   22204  &   3   &   17.7  &  15  &  20.7  &   15   \\ \hline     
	      9478   & 6    &   12.6  &  17   &   14.6   &     18     &   42922  &   6   &   20.5  &  17  &  23.9  &   17    \\ \hline    
	      42922  & 12   &   15.7  &  22   &   18.2   &     24     &   116110 &   12  &   24  &  19  &  28 &   21    \\ \hline    
	      244594 & 25   &   18.9  &  28   &   22     &     30     &   \bf 443926  & \bf 25   & \bf 28  & \bf 22  & \bf 32.6 & \bf 22   \\ \hline  
  \end{tabular}
  \caption{3D example with $p = 2$ (left) and $p = 4$ (right), and jumping diffusion coefficient. Dependence of the condition number $\kappa$ and the number It. of iterations on $H/h$ for the preconditioned system with coefficient and stiffness scaling. Choice of primal variables: vertex evaluation (upper table), vertex evaluation and edge averages and face averages (lower table).}
\label{table:jump3D}
  \end{table}


 \subsection{Dependence in $h^{(k)}/h^{(l)}$}
 \label{sec:dependence_hkhl}
 In this subsection, we deal with dependence of the condition number  on the ratio $q=h^{(k)}/h^{(l)}$ of mesh sizes corresponding to neighbouring patches,
The initial domain is the 
same 
as given in \figref{fig:YETI_footprint}, but without the additional refinements in 
certain patches, i.e. $h^{(k)}/h^{(l)}=1$. Then we consequently perform uniform refinement in the considered patches and obtain $h^{(k)}/h^{(l)}= 2^{-r}$, where $r$ is the number of refinements. 
In the numerical tests, we only consider the cases $\alpha\in\{10^{-4}, 10^4\}$ and $p = 4$.
The results for 2D and 3D are summarized in \tabref{table:hihj} and indicate that the condition number is independent of the ratio $h^{(k)}/h^{(l)}$ for 2D and 3D, as also predicted for FE in \cite{HL:DryjaGalvisSarkis:2013a}. We note that the increasing condition number and number of iterations come along with the increased ratio $H/h$ and comparing the numbers of this test with the corresponding ones from \tabref{table:jump3D} we observe an agreement. Thus, it is 
noteworthy that, although in 2D the ratio $H/h$ is increasing, the condition number stays constant.

 \begin{table}[h!]
\centering
\begin{tabular}{|r|c|c|cc|cc||r|c|c|cc|cc|}\hline
\multicolumn{7}{|c||}{ dim = 2} 					             & \multicolumn{7}{|c|}{dim = 3} \\ \hline
 \multicolumn{3}{|c|}{ALG. A} &  \multicolumn{2}{|c|}{coeff.} & \multicolumn{2}{|c||}{stiff.}& \multicolumn{3}{|c|}{ALG. C} & \multicolumn{2}{|c|}{coeff.} & \multicolumn{2}{|c|}{stiff.}  \\ \hline
	    $\#$dofs &$q$ &$H/h$&$\kappa$&It.&$\kappa$&It.&$\#$dofs&$q$ &$H/h$&$\kappa$&It.&$\kappa$& It.\\ \hline
	      1816    & 1    & 2 &  4.92  &  13   &  5.21   &     14     &   9362  &   1 & 1  &   14.8  &  21  & 14.9  &   21   \\ \hline     
	      2134   & 2    & 4 &  4.93  &  13   &   5.36   &     14     &   11902 &   2 & 3  &   17.7  &  23  & 22.1  &   24    \\ \hline    
	      2962   & 4    & 8 &  4.93  &  13   &   5.55   &     14     &   20426 &   4 & 6  &  29.2  &  24  & 37.5  &   27    \\ \hline    
	      5386   & 8    & 16 & 4.93  &  13   &   5.69   &     14     &   56626 &   8 & 12 & 52.2  &  26  & 67.2  &   27   \\ \hline     
	      13306  & 16   & 32 & 4.93  &  13   &   5.71   &     14     & \bf 345268  &  \bf 16 &\bf  25 & \bf 98.4  & \bf 27  & \bf 124 & \bf  28  \\ \hline    
	      41434  & 32   & 64 & 4.92  &  13   &   5.66   &     14     & \bf 1758004 &  \bf 32 &\bf  50 & \bf 191  & \bf 28  & \bf 240 & \bf  28   \\ 	      
	      \hline\hline
	  \multicolumn{3}{|c|}{ALG. C}  & \multicolumn{4}{|c||}{ } & \multicolumn{3}{|c|}{ALG. B}  & \multicolumn{4}{|c|}{ }\\ \hline
	      1816    & 1    & 2 & 1.67  &  7    &   1.72   &     7      &   9362   &   1 &  1  &   14.8  &  15  & 16.6  &   15   \\ \hline     
	      2134   & 2    & 4 & 1.67 &  7      &   1.77   &     7      &   11902  &   2 &  3  &   19.5  &  15  & 28.7  &   17    \\ \hline    
	      2962   & 4    & 8 & 1.67  &  7     &   1.81   &     7      &   20426  &   4 &  6  &    29.2  &  16  & 37.5  &   18    \\ \hline    
	      5386   & 8    & 16&  1.67  &  7    &   1.85   &     7      &   56626  &   8 & 12  &   52.2  &  17  & 67.2  &   18   \\ \hline     
	      13306  & 16   & 32&  1.67  &  7    &   1.85   &     7      &\bf  345268   &  \bf  16& \bf 25  & \bf 98.4 & \bf 17 & \bf 124 &  \bf 19  \\ \hline    
	      41434  & 32   & 64&  1.67  &  7    &   1.84   &     7      &\bf  1758004  &  \bf  32& \bf 50  & \bf 308  & \bf 20  & \bf 240 &  \bf 20 \\ \hline
  \end{tabular}
  \caption{2D (left) and (3D)example with $p = 4$, and jumping diffusion coefficient. Dependence of the condition number $\kappa$ and the number It. of iterations on the ratio $q=h^{(k)}/h^{(l)}$ for the preconditioned system with coefficient and stiffness scaling. Choice of primal variables: in 2D vertex evaluation (upper table), vertex evaluation and edge averages (lower table), in 3D vertex evaluation and edge averages (upper table), vertex evaluation, edge averages and face averages (lower table).}
\label{table:hihj}
  \end{table}

 
\subsection{Dependence on $p$}
 \label{sec:dependence_p}
 In this subsection,  we study the dependence of the condition number on the degree $p$ of the B-Spline space. There are two ways to dealing with degree elevation. One method is to keep the smoothness of the space, i.e., the multiplicity of the knots is increased in each step. The other way keeps the multiplicity of the knots, while increasing the smoothness of the B-Spline. The first method retains the support of the B-Spline basis small, with the drawback of a larger number of dofs, while the second method does it vice versa, i.e., increasing the support of the B-Spline, while having a smaller number of dofs. The aim of this section is to investigate the effect of the two different elevation techniques on the condition number.
 
 We choose the computational domain  as the 2D and 3D YETI-footprint presented in \figref{fig:YETI_footprint} and the diffusion coefficient is chosen to be globally constant. The results are summarized in \tabref{table:pDependence2D} and in \tabref{table:pDependence3D} for the 2D and 3D domain, respectively. The numerical results indicate a at most linear dependence of the condition number of the preconditioned system on the B-Spline degree $p$. When considering the 2D domain, the dependence on the degree 
seems to
be 
even
logarithmic, see \figref{fig:pDependence}. One observes a significant increase of the condition number, when increasing the degree from 2 to 3 in 3D as illustrated in \figref{fig:pDependence}~(b). 
 
 %
\begin{table}[h!]
\centering
\begin{tabular}{|r| c|cc|cc||r| c|cc|cc|}\hline
      \multicolumn{6}{|c||}{Increasing the multiplicity} & \multicolumn{6}{|c|}{Increasing the smoothness} \\ \hline
      \multicolumn{2}{|c|}{ALG. C} &  \multicolumn{2}{|c|}{coeff. scal} & \multicolumn{2}{|c||}{stiff. scal.}&\multicolumn{2}{|c|}{ALG. C} &  \multicolumn{2}{|c|}{coeff. scal} & \multicolumn{2}{|c|}{stiff. scal.} \\ \hline
	      $\#$dofs & degree & $\kappa$ & It. & $\kappa$ & It. & $\#$dofs & degree & $\kappa$ & It. & $\kappa$ & It. \\ \hline
1610  &  2    &    1.36  &  9    &    1.37  &   9     &   1610 & 2  & 1.36   &  9    &  1.37   &  9   \\ \hline 
4706  &  3    &    1.68  &  11   &    1.69  &   11    &   2006 & 3 &  1.55   &  10   &  1.57   &  11  \\ \hline 
9370  &  4    &    1.95  &  12   &    1.96  &   13    &   2444 & 4 &  1.74   &  11   &  1.77   &  12 \\ \hline 
15602 &  5    &    2.19  &  13   &    2.2   &   14    &   2924 & 5 &  1.88   &  12   &  1.93   &  12 \\ \hline 
23402 &  6    &    2.4   &  15   &    2.4   &   15    &   3446 & 6 &  2.03   &  13   &  2.09   &  13 \\ \hline 
32770 &  7    &    2.59  &  15   &    2.59  &   16    &   4010 & 7 &  2.14   &  14   &  2.22   &  14 \\ \hline 
43706 &  8    &    2.77  &  16   &    2.76  &   16    &   4616 & 8 &  2.27   &  14   &  2.36   &  14 \\ \hline 
56210 &  9    &    2.93  &  17   &    2.92  &   17    &   5264 & 9 &  2.36   &  15   &  2.47   &  15 \\ \hline 
70282 &  10   &    3.08  &  17   &    3.06  &   17    &   5954 & 10 &  2.48   &  15   &  2.59   &  15 \\ \hline 
  \end{tabular}
   \caption{2D example with fixed initial mesh and homogeneous diffusion coefficient. Dependence of the condition number $\kappa$ and the number It. of iterations on $H/h$ for the preconditioned system with coefficient and stiffness scaling. Choice of primal variables: vertex evaluation and edge averages.}
\label{table:pDependence2D}
  \end{table}
\begin{table}[h!]
\centering
\begin{tabular}{|r| c|cc|cc||r| c|cc|cc|}\hline
      \multicolumn{6}{|c||}{Increasing the multiplicity} & \multicolumn{6}{|c|}{Increasing the smoothness} \\ \hline
      \multicolumn{2}{|c|}{ALG. B} &  \multicolumn{2}{|c|}{coeff. scal} & \multicolumn{2}{|c||}{stiff. scal.}&\multicolumn{2}{|c|}{ALG. B} &  \multicolumn{2}{|c|}{coeff. scal} & \multicolumn{2}{|c|}{stiff. scal.} \\ \hline
	      $\#$dofs & degree & $\kappa$ & It. & $\kappa$ & It. & $\#$dofs & degree & $\kappa$ & It. & $\kappa$ & It. \\ \hline
2800   &  2    &    1.49  &  9    &    1.45  &   9     &   2800  & 2  & 1.49   &  9    &  1.45   &  9   \\ \hline 
9478   &  3    &    17.9  &  20   &    16.6  &   20    &   4864  & 3 &  17.1   &  18   &  16.4   &  18  \\ \hline 
22204  &  4    &    23.3  &  33   &    21.5  &   34    &   7714  & 4 &  21.7   &  32   &  21   &    33 \\ \hline 
42922  &  5    &    29.0  &  39   &    26.9  &   39    &   11476 & 5 &  27.0   &  45   &  26.3   &  46 \\ \hline 
73576  &  6    &    34.4  &  51   &    32.0  &   49    &   16276 & 6 &  31.9   &  47   &  31.3   &  47 \\ \hline 
116110 &  7    &    40.2  &  54   &    37.6  &   55    &   22240 & 7 &  37.5   &  51   &  36.9   &  50 \\ \hline 
  \end{tabular}
   \caption{3D example with fixed initial mesh and homogeneous diffusion coefficient. Dependence of the condition number $\kappa$ and the number It. of iterations on $H/h$ for the preconditioned system with coefficient and stiffness scaling. Choice of primal variables: vertex evaluation, edge averages and face averages.}
\label{table:pDependence3D}
  \end{table}
   \begin{figure}[h!]
  \begin{subfigmatrix}{2}
\subfigure[2D setting]{\includegraphics[width=0.45\textwidth]{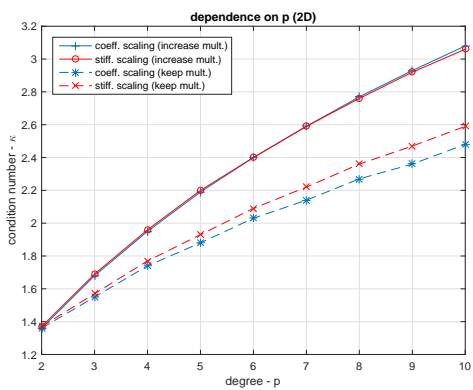} }
\subfigure[3D setting]{\includegraphics[width=0.45\textwidth]{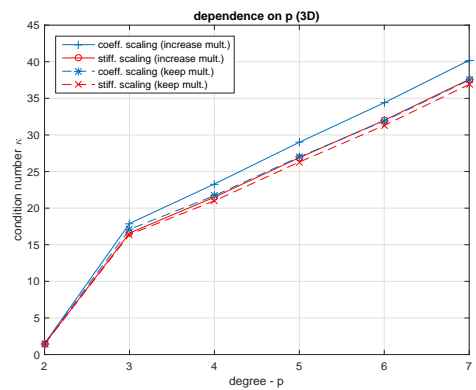} }
 \end{subfigmatrix}
  \caption{Dependence of the condition number on the B-Spline degree $p$ for the 2D and 3D domain. We compare the influence of the considered scaling strategy and method for increasing the degree.}
  \label{fig:pDependence}
  \end{figure}
%


 \subsection{Performance and OpenMP}
As already explained in the beginning of this section, we use the open source  C++ library G+SMO for materialising the code. The 
LU-factorizations for the local solvers were performed by means of the PARDISO 5.0.0 Solver Project, see \cite{HL:PARDISO500}. 
In \tabref{table:timings}, we investigate the runtime of a serial implementation and compare it with the timings of the cG-IETI-DP algorithm as presented in \cite{HL:HoferLanger:2015a}. In order to compare the timings for the continuous IETI-DP and discontinuous IETI-DP method, we use the setting as described in \cite{HL:HoferLanger:2015a}. 
More precisely,
the computational domain is the 2D example from \figref{fig:YETI_footprint}, but with fully matching patches, i.e. no additional refinements on selected patches. 
We observe from \tabref{table:timings} that the dG-IETI-DP method 
shows
a very similar performance. The increased number of primal 
variables and the extended version of the stiffness matrix $\boldsymbol{K}_e$ leads to a slightly larger runtime. As already mentioned, we use the PARDISO Solver Project instead of the SparseLU factorization of the open source library ``Eigen''\footnote{\url{http://eigen.tuxfamily.org/index.php?title=Main_Page}} as in Subsection 5.4 in \cite{HL:HoferLanger:2015a}. This change led to a significant speed-up in the computation time for the LU-factorization. Small deviations in the timings compared to Table 6 in \cite{HL:HoferLanger:2015a} might be due to some changes regarding the parallel implementation. 
 
   \begin{table}[h!]
   \centering
   \begin{tabular}{|l|r|r||r|r|}
   \hline
      & \multicolumn{2}{|c||}{Wall-clock time} & \multicolumn{2}{|c|}{relative time in \%} \\ \hline
      &\,cG\,&\,dG\,&\,cG\,&\,dG\,\\ \hline\hline
    Preparing the bookkeeping		 				& 0.012 s &  0.02 s & 0.06   &   0.11    \\ \hline\hline
    Assembling all patch local $\boldsymbol{K}^{(k)}$ 				&  6.4 s  &   6.8 s    & 35.56  & 35.79    \\
    Partitioning w.r.t. $B$ and $I$ 	 				& 0.085 s &   0.12 s  & 0.48  &  0.63  \\
    Assembling C 		  	 				& 0.017 s &  0.034 s & 0.09  &   0.18   \\
    LU-fact. of $\boldsymbol{K}_{II}^{(k)}$	 				& 2.5 s &     2.5 s   & 13.89  &  13.16   \\
    LU-fact. of $\MatTwo{K^{(k)}}{{C^{(k)}}^T}{C^{(k)}}{0}$	& 3.9 s   &  4 s   & 21.67  &   21.05     \\
    Assembling and LU-fact. of $\boldsymbol{S}_{\Pi\Pi}$ 					& 0.78 s  &  1.1 s   & 4.33   & 5.79     \\
    Assemble rhs. 							& 0.13 s &   0.13 s  &0.72    & 0.68    \\ \hline
    Total assembling 							& 14 s    &  15 s   & 77.78   &   78.95 \\ \hline \hline
    One PCG iteration 							& 0.34 s  & 0.36 s &  -   &   -   \\ \hline
    Solving the system 							& 3.4 s   &   3.6 s   & 18.89   & 18.95   \\ \hline \hline
    Calculating the solution $\boldsymbol{u}$ 					& 0.33 s  &   0.33 s  & 1.83   & 1.74    \\ \hline\hline
    Total spent time							& 18 s  &       19 s  &100.00  & 100.00 \\ \hline
   \end{tabular}
   \caption{Serial computation times of the 2D example  with coefficient scaling and Algorithm C. The discrete problem consists of $121824$ total degrees of freedom, $1692$ Lagrange multipliers, and on each patch approximate $4900$ local degrees of freedom according to the setting in \cite{HL:HoferLanger:2015a}, Section~5. Column 2 and 3 present the absolute spent time, whereas column 4 and 5 present the relative one for the cG-IETI-DP and dG-IETI-DP method.}
\label{table:timings}
  \end{table}
 The IETI-DP method is well suited for a parallel implementation, since most of the computations are independent of each other. Only the assembly and application of the Schur complement $\boldsymbol{S}_{\Pi\Pi}$ corresponding to the primal variables requires communication with  neighbouring patches. 
Although the structure of the algorithm 
perfectly
suits the 
framework of distributed memory models, hence, using MPI, we first implemented a version using OpenMP. We want to mention that the numerical examples in \secref{sec:hom_coeff} and \secref{sec:jumping_coeff} are performed by means of the parallel implementation. The major problem for obtaining a scaleable method is the unequally distributed workload. This arises from the fact that in IgA the partition of the domain is mostly based on geometric aspect, where as in FE the mesh is partitioned in such a way, that each patch has a similar number of dofs. Especially, the cases considered in \secref{sec:hom_coeff} and \secref{sec:jumping_coeff}, where we have non-matching meshes, lead to 
very unequally distributed workloads. To summarize, in order to achieve a scalable IETI type solver one has to spend some time in finding an equally distributed workload for each thread, e.g. performing further subdivisions of certain patches and optimal assignment of patches to threads.
Alternatively, one can use inexact IETI-DP versions in which the subdomain problems are 
solved respectively preconditioned by robust parallel multigrid methods 
like proposed in \cite{HL:HofreitherTakacsZulehner:2015a}.

\section{Conclusions and outlook}
\label{HL:Sec:Conclusions}
  In this paper, we investigated an adaption of the IETI-DP method 
  to
  dG-IgA equations, i.e. we used dG techniques to couple non-matching meshes across patch interfaces. The numerical examples in \secref{sec:numerical} indicate the same quasi-optimal behaviour of the condition number of the dG-IETI-DP operator with respect to $H/h$, and  show robustness with respect to jumps in the diffusion coefficient. Additionally, the condition number in 2D and 3D seems to be independent of the ratio $h^{(k)}/h^{(l)}$. 
Moreover, we examined the dependence of the condition number on the B-Spline degree. We found that in 2D the dependence is quite weak, while in 3D one observes a more significant increase. As illustrated in \figref{fig:pDependence}, the dependence seems to be  linear or even logarithmic in 2D, while it is clearly linear in 3D. Finally, we investigated the performance of the dG-IETI-DP 
in comparison with
the cG-IETI-DP method, which turns out to be very similar. 
The theoretical analysis, 
that was done for cG-IETI-DP in 
\cite{HL:HoferLanger:2015a}, is more technical for dG-IETI-DP,
cf. \cite{HL:DryjaGalvisSarkis:2013a} for dG-FETI-DP.
The dG-IETI-DP can be generalized to dG-IgA schemes 
on segmentations with non-matching interfaces (segmentation crimes) 
studied in
\cite{HL:HoferLangerToulopoulos_2015a}
and 
\cite{HL:HoferToulopoulos_2015a}.

\section*{Acknowledgment}
The research is supported by the Austrian Science Fund (FWF) through the
NFN S117-03 project. 










\bibliographystyle{abbrv}
\bibliography{HoferLanger}

\end{document}